\newtheorem{thm}{Theorem}[section]
\newtheorem{cor}[thm]{Corollary}
\newtheorem{lem}[thm]{Lemma}
\newtheorem{prop}[thm]{Proposition}
\newtheorem{problem}[thm]{Problem}
\theoremstyle{definition}
\newtheorem{definition}[thm]{Definition}
\newtheorem{remark}[thm]{Remark}
\newtheorem{remarks}[thm]{Remarks}
\renewcommand{\epsilon}{\varepsilon}
\renewcommand{\phi}{\varphi}
\newcommand{\defeq}{\mathrel{\mathop:}=}
\DeclareMathOperator{\spt}{spt}
\DeclareMathOperator{\diam}{diam}
\def\moverlay{\mathpalette\mov@rlay}
\def\mov@rlay#1#2{\leavevmode\vtop{%
		\baselineskip\z@skip \lineskiplimit-\maxdimen
		\ialign{\hfil$\m@th#1##$\hfil\cr#2\crcr}}}
\newcommand{\charfusion}[3][\mathord]{
	#1{\ifx#1\mathop\vphantom{#2}\fi
		\mathpalette\mov@rlay{#2\cr#3}
	}
	\ifx#1\mathop\expandafter\displaylimits\fi}
\newcommand{\bigcupdot}{\charfusion[\mathop]{\bigcup}{\cdot}}
\begin{document}
%%%%%%%%%%%%%%%%%%%%%%%%%%%%%%%%

\setlist{noitemsep}

\author{Friedrich Martin Schneider}
\address{F.M.S., Institute of Algebra, TU Dresden, 01062 Dresden, Germany }
%\curraddr{Departamento de Matem\'atica, UFSC, Trindade, Florian\'opolis, SC, 88.040-900, Brazil}
\email{martin.schneider@tu-dresden.de}
%\thanks{The author acknowledges funding of the Excellence Initiative by the German Federal and State Governments, as well as the Brazilian CNPq, processo~150929/2017-0.}

\title[Equivariant dissipation]{Equivariant dissipation in non-archimedean groups}
\date{\today}

\keywords{$mm$-spaces, concentration, dissipation, non-archimedean topological groups}

\begin{abstract} 
 We prove that, if a topological group $G$ has an open subgroup of infinite index, then every net of tight Borel probability measures on $G$ UEB-converging to invariance dissipates in $G$ in the sense of Gromov. In particular, this solves a 2006 problem by Pestov: for every left-invariant (or right-invariant) metric $d$ on the infinite symmetric group $\mathrm{Sym}(\mathbb{N})$, compatible with the topology of pointwise convergence, the sequence of the finite symmetric groups $(\mathrm{Sym}(n),d\!\!\upharpoonright_{\mathrm{Sym}(n)},\mu_{\mathrm{Sym}(n)})_{n \in \mathbb{N}}$ equipped with the restricted metrics and their normalized counting measures dissipates, thus fails to admit a subsequence being Cauchy with respect to Gromov's observable distance.
\end{abstract}

\subjclass[2010]{54H11, 22A10, 53C23, 51F99}

\maketitle

%%%%%%%%%%%%%%%%%%%%%%%%%%%%%%%%%%%%%%%%%%
%%%%%%%%%%%%%%%%%%%%%%%%%%%%%%%%%%%%%%%%%%

\section{Introduction}

In his seminal work on metric measure geometry~\cite[Chapter~3$\tfrac{1}{2}$]{Gromov99}, Gromov introduced the \emph{observable distance}, $d_{\mathrm{conc}}$, a metric on the set of isomorphism classes of $mm$-spaces, i.e., separable complete metric spaces equipped with a Borel probability measure. This metric generates an interesting topology, commonly referred to as the \emph{concentration topology}, which generalizes the \emph{L\'evy concentration property}~\cite{levy,milman,MilmanSchechtman} in a very natural way: a sequence of $mm$-spaces constitutes a L\'evy family if and only if it converges to a singleton space with respect Gromov's concentration topology. Inspired by the work of Gromov and Milman~\cite{GromovMilman} on applications of concentration to dynamics of topological groups, Pestov proposed to study instances of concentration to non-trivial spaces in the context of topological groups~\cite[Section~7.4]{PestovBook} (see also his work with Giordano~\cite[Section~7]{GiordanoPestov}).

In the present note, we study Gromov's observable distance with regard to \emph{non-archimedean} topological groups, i.e., those whose neutral element admits a neighborhood basis consisting of (open) subgroups. More specifically, our focus will be on the topological group $\mathrm{Sym}(\mathbb{N})$ of all permutations of the set $\mathbb{N}$ of natural numbers, endowed with the topology of pointwise convergence. In~\cite{GlasnerWeiss02}, Glasner and Weiss showed that the closed subspace $\mathrm{LO}(\mathbb{N}) \subseteq 2^{\mathbb{N} \times \mathbb{N}}$ of linear orders on $\mathbb{N}$, equipped with the natural continuous left $\mathrm{Sym}(\mathbb{N})$-action given by \begin{displaymath}
	x \, \, {{}^{g}\!\!\prec} \, \, y \ \, \Longleftrightarrow \ \, g^{-1}x \prec g^{-1}y \qquad (g \in \mathrm{Sym}(\mathbb{N}), \, {\prec} \in \mathrm{LO}(\mathbb{N}), \, x,y \in \mathbb{N}) ,
\end{displaymath} constitutes the universal minimal flow of $\mathrm{Sym}(\mathbb{N})$ and admits a unique $\mathrm{Sym}(\mathbb{N})$-invariant Borel probability measure $\mu_{\mathrm{LO}}$. Their results prompted Pestov to pose the following question.

\begin{problem}[\cite{PestovBook}, Problem~7.4.27]\label{problem} For each $n \in \mathbb{N}$, let us denote by $\mu_{n}$ the normalized counting measure on $\mathrm{Sym}(n) \subseteq \mathrm{Sym}(\mathbb{N})$. Do there exist compatible metrics $d_{\mathrm{LO}}$ on $\mathrm{LO}(\mathbb{N})$ and $d_{\mathrm{Sym}}$, left-invariant, on $\mathrm{Sym}(\mathbb{N})$ such that \begin{displaymath}
	d_{\mathrm{conc}} \bigl( \bigl(\mathrm{Sym}(n),d_{\mathrm{Sym}}\!\!\upharpoonright_{\mathrm{Sym}(n)},\mu_{n}\bigr),\bigl(\mathrm{LO}(\mathbb{N}),d_{\mathrm{LO}},\mu_{\mathrm{LO}}\bigr)\bigr) \, \longrightarrow \, 0 \quad (n \longrightarrow \infty) \ ?
\end{displaymath}  \end{problem}

The purpose of this note is to resolve Problem~\ref{problem} in the negative. In fact, our Corollary~\ref{corollary:final} particularly entails that, if $d$ is any left-invariant compatible metric on $\mathrm{Sym}(\mathbb{N})$, then the sequence $\bigl(\mathrm{Sym}(n),d\!\!\upharpoonright_{\mathrm{Sym}(n)},\mu_{n}\bigr)_{n \in \mathbb{N}}$ does not even admit a $d_{\mathrm{conc}}$-Cauchy subsequence, where $\mu_{n}$ denotes the normalized counting measure on $\mathrm{Sym}(n)$ for each $n \in \mathbb{N}$. Our argument proving Corollary~\ref{corollary:final} does indeed establish \emph{dissipation} (Definition~\ref{definition:dissipation}), a phenomenon stronger than the negation of concentration (cf.~Corollary~\ref{corollary:dissipation.vs.concentration}, Remark~\ref{remark:dissipation.vs.concentration}), and moreover works in much greater generality, thus allowing us to deduce a dichotomy concerning the geometric behavior of asymptotically invariant nets of Borel probability measures in arbitrary non-archimedean topological groups (Theorem~\ref{theorem:equivariant.dissipation}, Corollary~\ref{corollary:equivariant.dissipation}, Corollary~\ref{corollary:dichotomy}). %This complements the author's recent work~\cite{EquivariantConcentration}.

This note is organized as follows. In Section~\ref{section:metrics} we recollect some basic material from metric geometry, most importantly, the Gromov-Hausdorff distance and Gromov's compactness theorem. Then, Section~\ref{section:measures} is devoted to a short introduction to $mm$-spaces and observable distance, as well as a brief comparison of the concepts of concentration and dissipation. In Section~\ref{section:invariance} we provide the background on UEB-convergence to invariance in topological groups necessary for Section~\ref{section:groups}, where we turn our attention towards non-archimedean topological groups, prove the aforementioned dichotomy, and infer the solution to Pestov's Problem~\ref{problem}.

\section{Metric geometry: Gromov's compactness theorem}\label{section:metrics}

In this section, we recollect some very few bits of metric geometry, the most important of which will be the Gromov-Hausdorff distance and Gromov's compactness theorem. For more on this, the reader is referred to~\cite{BBI} or~\cite[Chapter~3]{ShioyaBook}.

For a start, let us briefly clarify some basic notation and terminology concerning metric spaces. By a \emph{compatible} metric on a (metrizable) topological space $X$, we will mean a metric generating the topology of $X$. Let $\mathcal{X} = (X,d)$ be a pseudo-metric space. The \emph{diameter} of $\mathcal{X}$ is defined as $\diam (\mathcal{X}) \defeq \sup \{ d(x,y) \mid x,y \in X \}$. Given any real number $\ell \geq 0$, let us denote by $\mathrm{Lip}_{\ell}(\mathcal{X})$ the set of all $\ell$-Lipschitz real-valued functions on $\mathcal{X}$, and define \begin{displaymath}
	\mathrm{Lip}_{\ell}^{s}(\mathcal{X}) \defeq \{ f \in \mathrm{Lip}_{\ell}(\mathcal{X}) \mid \sup\nolimits_{x \in X} \vert f(x) \vert \leq s \}
\end{displaymath} for any real number $s \geq 0$. For a real number $\epsilon > 0$, a subset $B \subseteq X$ is said to be \emph{$\epsilon$-discrete} in $\mathcal{X}$ if $d(x,y) > \epsilon$ for any two distinct $x,y \in B$, and the \emph{$\epsilon$-capacity} of $\mathcal{X}$ is defined as \begin{displaymath}
	\mathrm{Cap}_{\epsilon}(\mathcal{X}) \defeq \sup \{ \vert B \vert \mid B \subseteq X \text{ $\epsilon$-discrete in } \mathcal{X} \} .
\end{displaymath} Given a subset $A \subseteq X$, we abbreviate $d\!\!\upharpoonright_{A} \, \defeq d\vert_{A \times A}$. For $x \in A \subseteq X$ and $\epsilon > 0$, we let \begin{align*}
	& B_{d}(x,\epsilon) \defeq \{ y \in X \mid d(x,y) < \epsilon \} , & B_{d}(A,\epsilon) \defeq \{ y \in X \mid \exists a \in A \colon \, d(a,y) < \epsilon \} .
\end{align*} The \emph{Hausdorff distance} of two subsets $A,B \subseteq X$ in $\mathcal{X}$ is denoted by \begin{displaymath}
	\mathrm{H}_{\mathcal{X}}(A,B) \defeq \mathrm{H}_{d}(A,B) \defeq \inf \{ \epsilon > 0 \mid B \subseteq B_{d}(A,\epsilon), \, A \subseteq B_{d}(B,\epsilon) \} .
\end{displaymath}

\begin{definition} The \emph{Gromov-Hausdorff distance} between any two arbitrary compact metric spaces $\mathcal{X} = (X,d_{X})$ and $\mathcal{Y} = (Y,d_{Y})$ is defined as \begin{displaymath}
	\left. d_{\mathrm{GH}}(\mathcal{X},\mathcal{Y}) \defeq \inf \left\{ \mathrm{H}_{\mathcal{Z}}(\phi(X),\psi(Y)) \, \right| \mathcal{Z} \text{ metric space, } \phi \colon \mathcal{X} \to \mathcal{Z}, \, \psi \colon \mathcal{Y} \to \mathcal{Z} \text{ isom.~emb.} \right\} .
\end{displaymath} \end{definition}

The Gromov-Hausdorff distance of compact metric spaces is easily seen to be invariant under isometries, i.e., $d_{\mathrm{GH}}(\mathcal{X}_{0},\mathcal{X}_{1}) = d_{\mathrm{GH}}(\mathcal{Y}_{0},\mathcal{Y}_{1})$ for any two pairs of isometrically isomorphic compact metric spaces $\mathcal{X}_{i} \cong \mathcal{Y}_{i}$ ($i \in \{ 0,1 \}$). Furthermore, $d_{\mathrm{GH}}$ gives a complete metric on the set of isomorphism classes of compact metric spaces~\cite[Lemma~3.9]{ShioyaBook}. In particular, two compact metric spaces $\mathcal{X}$ and $\mathcal{Y}$ are isometrically isomorphic if and only if $d_{\mathrm{GH}}(\mathcal{X},\mathcal{Y}) = 0$.

Let us recall below a useful description of $d_{\mathrm{GH}}$-precompactness in terms of capacities, known as Gromov's compactness theorem. We will say that a set $\mathcal{C}$ of (isometry classes of) compact metric spaces has \emph{uniformly bounded capacity} if $\sup\nolimits_{\mathcal{X} \in \mathcal{C}} \mathrm{Cap}_{\epsilon}(\mathcal{X}) < \infty$ for every $\epsilon > 0$.
		
\begin{thm}[cf.~\cite{ShioyaBook}, Lemma~3.12; \cite{BBI}, Section~7.4.2]\label{theorem:gromov.precompactness} A set $\mathcal{C}$ of isometry classes of compact metric spaces is $d_{\mathrm{GH}}$-precompact if and only if $\mathcal{C}$ has uniformly bounded capacity and $\sup_{\mathcal{X} \in \mathcal{C}} \diam (\mathcal{X}) < \infty$. \end{thm}

\section{Metric measure geometry: concentration vs.~dissipation}\label{section:measures}
		
We now turn to the study of measured metric spaces. In this section, we will only briefly review the concepts of \emph{concentration} and \emph{dissipation}, two phenomena at opposite ends of the spectrum of the asymptotic behavior of measured metric spaces. A more substantial account on metric measure geometry is to be found in~\cite{Gromov99,ShioyaBook}. For more details on the classical phenomenon of measure concentration, the reader is referred to~\cite{MilmanSchechtman,ledoux}.

To begin with, let us address some few matters of notation. Let $X$ be a measurable~space. As usual, we will denote by $\delta_{x}$ the Dirac measure on $X$ associated with a point~$x \in X$. Furthermore, given a finite non-empty subset $F \subseteq X$, we will consider the probability measure $\delta_{F} \defeq \vert F \vert^{-1}\sum_{x \in F} \delta_{x}$ on $X$. We denote by $\mathrm{P}(X)$ the set of all probability measures on $X$. Consider any $\mu \in \mathrm{P}(X)$. If $B$ is a measurable subset of $X$ with $\mu (B) = 1$, then we may consider the probability measure $\mu \!\!\upharpoonright_{B}$ on the measurable subspace $B$ given by $\mu \!\!\upharpoonright_{B} \, \defeq \mu(A)$ for every measurable subset $A \subseteq B$. The \emph{push-forward measure} $f_{\ast}(\mu)$ of $\mu$ along a measurable map $f \colon X \to Y$ into another measurable space $Y$ is defined by $f_{\ast}(\mu)(B) \defeq \mu(f^{-1}(B))$ for every measurable $B \subseteq Y$. Moreover, we obtain a pseudo-metric $\mathrm{me}_{\mu}$ on the set of all measurable real-valued functions $X$ defined by \begin{displaymath}
	\mathrm{me}_{\mu}(f,g) \defeq \inf \{ \epsilon > 0 \mid \mu (\{ x \in X \mid \vert f(x) - g(x) \vert > \epsilon \}) \leq \epsilon \} 
\end{displaymath} for any two measurable functions $f,g \colon X \to \mathbb{R}$. Finally, let $\nu$ be a Borel probability measure on a Hausdorff topological space $T$. Then the \emph{support} of $\nu$ is defined as \begin{displaymath}
	\spt \nu \defeq \{ x \in T \mid \forall U \subseteq T \text{ open} \colon \, x \in U \Longrightarrow \nu (U) > 0\} ,
\end{displaymath} which forms a closed subset of $T$. Furthermore, $\nu$ will be called \emph{tight}~\cite[Definition~7.1.4]{Bogachev} if for every $\epsilon > 0$ there exists a compact subset $K \subseteq T$ with $\nu (K) \geq 1 -\epsilon$, and $\nu$ will be called \emph{regular} if $\nu (B) = \sup \{ \nu(K) \mid K\subseteq B, \, K \text{ compact} \}$ for every Borel subset $B \subseteq T$. Evidently, regularity implies tightness. Conversely, if $T$ is metrizable and $\nu$ is tight, then $\nu$ will be~regular as well (see, e.g.,~\cite[Chapter~II, Theorem~3.1]{part}). For additional measure-theoretic background, we refer to~\cite{part,Bogachev}.

\begin{definition} Let $\mathcal{X} = (X,d,\mu)$ be an \emph{$mm$-space}, that is, $(X,d)$ is a separable complete metric space and $\mu$ is a Borel probability measure on $X$. We will call $\mathcal{X}$ \emph{compact} if $(X,d)$ is compact, and \emph{fully supported} if $\spt \mu = X$. A \emph{parametrization} of $\mathcal{X}$ is a Borel measurable map $\phi \colon [0,1] \to X$ such that $\phi_{\ast}(\lambda) = \mu$, where $\lambda$ denotes the Lebesgue measure on~$[0,1]$. We will call two $mm$-spaces $\mathcal{X}_{0} = (X_{0},d_{0},\mu_{0})$ and $\mathcal{X}_{1} = (X_{1},d_{1},\mu_{1})$ \emph{isomorphic} and write~$\mathcal{X}_{0} \cong \mathcal{X}_{1}$ if there exists an isometry \begin{displaymath}
	f \colon \left(\spt \mu_{0},d_{0}\! \! \upharpoonright_{\spt \mu_{0}}\right) \, \longrightarrow \, \left(\spt \mu_{1},d_{1}\! \! \upharpoonright_{\spt \mu_{1}}\right)
\end{displaymath} such that $f_{\ast}\! \left(\mu_{0}\! \! \upharpoonright_{\spt \mu_{0}}\right) = \mu_{1}\! \! \upharpoonright_{\spt \mu_{1}}$. \end{definition}
		
It is well known that any $mm$-space admits a parametrization (see e.g.~\cite[Lemma~4.2]{ShioyaBook}).

\begin{definition}\label{definition:observable.distance} The \emph{observable distance} between two $mm$-spaces $\mathcal{X}$ and $\mathcal{Y}$ is defined to be \begin{displaymath}
	\left. d_{\mathrm{conc}}(\mathcal{X},\mathcal{Y}) \defeq \inf \left\{ \mathrm{H}_{\mathrm{me}_{\lambda}}(\mathrm{Lip}_{1}(\mathcal{X}) \circ \phi, \mathrm{Lip}_{1}(\mathcal{Y}) \circ \psi) \, \right| \phi \text{ param.~of } \mathcal{X}, \, \psi \text{ param.~of } \mathcal{Y} \right\} .
\end{displaymath} A sequence $(\mathcal{X}_{n})_{n \in \mathbb{N}}$ of $mm$-spaces is said to \emph{concentrate to} an $mm$-space $\mathcal{X}$ if \begin{displaymath}
	\lim\nolimits_{n \to \infty} d_{\mathrm{conc}}(\mathcal{X}_{n},\mathcal{X}) = 0 .
\end{displaymath} \end{definition}

It is easy to see that the observable distance of $mm$-spaces is invariant under isomorphisms, which means that $d_{\mathrm{conc}}(\mathcal{X}_{0},\mathcal{X}_{1}) = d_{\mathrm{conc}}(\mathcal{Y}_{0},\mathcal{Y}_{1})$ for any two pairs of isomorphic $mm$-spaces $\mathcal{X}_{i} \cong \mathcal{Y}_{i}$ ($i \in \{ 0,1 \}$). Furthermore, $d_{\mathrm{conc}}$ induces a metric on the set of isomorphism classes of \mbox{$mm$-spaces}, see~\cite[Theorem~5.16]{ShioyaBook}. In particular, this entails that two $mm$-spaces $\mathcal{X}$ and $\mathcal{Y}$ are isomorphic if and only if $d_{\mathrm{conc}}(\mathcal{X},\mathcal{Y}) = 0$.

Let us proceed to the concept of dissipation, cf.~\cite[Chapter~3$\tfrac{1}{2}$.J]{Gromov99}, \cite[Chapter~8]{ShioyaBook}. As will become evident in Corollary~\ref{corollary:dissipation.vs.concentration}, dissipating sequences of $mm$-spaces are, in a certain sense, as far from being convergent with respect to Gromov's observable distance as possible. The observation most crucial for the proof of Corollary~\ref{corollary:dissipation.vs.concentration} will be given in Lemma~\ref{lemma:capacity}.

\begin{definition}\label{definition:dissipation} Let $\mathcal{X} = (X,d,\mu)$ be an $mm$-space. For every $m \in \mathbb{N}\setminus \{ 0 \}$ and real numbers $\kappa_{0},\ldots,\kappa_{m} > 0$, the corresponding \emph{separation distance} is defined as \begin{displaymath}
	\mathrm{Sep}(\mathcal{X};\kappa_{0},\ldots,\kappa_{m}) \defeq \sup\nolimits_{B \in [\mathcal{X};\kappa_{0},\ldots,\kappa_{m}]} \inf \{ d(x,y) \mid i,j \in \{ 0,\ldots,m \}, \, i\ne j, \, x \in B_{i}, \, y \in B_{j} \} ,
\end{displaymath} where we abbreviate \begin{displaymath}
	[\mathcal{X};\kappa_{0},\ldots,\kappa_{m}] \defeq \{ (B_{0},\ldots,B_{m}) \mid B_{0},\ldots,B_{m} \subseteq X \text{ Borel}, \, \mu(B_{0}) \geq \kappa_{0}, \ldots, \mu(B_{m}) \geq \kappa_{m} \} 
\end{displaymath} and the infima and suprema are taken in the interval $[0,\infty)$.\footnote{For any $x \in X$ there exists $n \in \mathbb{N}\setminus \{ 0 \}$ such that $\mu (B_{d}(x,n)) > 1 - \min \{ \kappa_{0},\ldots,\kappa_{m} \}$, which is easily seen to imply that $\mathrm{Sep}(\mathcal{X};\kappa_{0},\ldots,\kappa_{m}) \leq 2n$. In particular, $\mathrm{Sep}(\mathcal{X};\kappa_{0},\ldots,\kappa_{m}) < \infty$.} For any $m \in \mathbb{N}\setminus \{ 0 \}$ and any real number $\alpha > 0$, we define $\mathrm{Sep}_{m}(\mathcal{X};\alpha) \defeq \mathrm{Sep}(\mathcal{X};\kappa_{0},\ldots,\kappa_{m})$ where $\kappa_{0} = \ldots = \kappa_{m} = \alpha$. With regard to a real number $\delta > 0$, a sequence of $mm$-spaces $(\mathcal{X}_{n})_{n \in \mathbb{N}}$ is said to \emph{$\delta$-dissipate} if, for any $m \in \mathbb{N}\setminus \{ 0 \}$ and real numbers $\kappa_{0},\ldots,\kappa_{m} > 0$ with $\sum_{i=0}^{m} \kappa_{i} < 1$, \begin{displaymath}
	\liminf\nolimits_{n \to \infty} \mathrm{Sep}(\mathcal{X}_{n};\kappa_{0},\ldots,\kappa_{m}) \, \geq \, \delta .
\end{displaymath} A sequence of $mm$-spaces is said to \emph{dissipate} if it $\delta$-dissipates for some $\delta > 0$. \end{definition}

Let us note the following useful reformulation of dissipation.

\begin{lem}\label{lemma:dissipation.revisited} Let $\delta > 0$. A sequence of $mm$-spaces $\mathcal{X}_{n} = (X_{n},d_{n},\mu_{n})$ $(n \in \mathbb{N})$ $\delta$-dissipates if and only if, for every $\tau \in (0,\delta)$, there exists a sequence $(\mathcal{B}_{n})_{n \in \mathbb{N}}$ such that \begin{enumerate}
	\item[$(0)$] \, for each $n \in \mathbb{N}$, $\mathcal{B}_{n}$ is a finite set of Borel subsets of $X_{n}$,
	\item[$(1)$] \, $\inf \{ d_{n}(x,y) \mid x \in B, \, y \in C \} \geq \tau$ for every $n \in \mathbb{N}$ and any two distinct $B,C \in \mathcal{B}_{n}$,
	\item[$(2)$] \, $\lim_{n \to \infty}\mu_{i}(\bigcup \mathcal{B}_{n}) = 1$, and
	\item[$(3)$] \, $\lim_{n \to \infty} \sup \{ \mu_{n}(B) \mid B \in \mathcal{B}_{n} \} = 0$.
\end{enumerate}\end{lem}

\begin{proof} ($\Longrightarrow$) Let $\tau \in (0,\delta)$. For each $n \in \mathbb{N}$, we define \begin{displaymath}
	k_{n} \defeq \sup \left\{ k \in \{ 1,\ldots, n \} \left\vert \, \mathrm{Sep}_{k-1}\!\left(\mathcal{X}_{n};(k+1)^{-1}\right) > \tau \right\} . \right.
\end{displaymath} Since $(\mathcal{X}_{n})_{n \in \mathbb{N}}$ $\delta$-dissipates, $k_{n} \longrightarrow \infty$ as $n \longrightarrow \infty$. For each $n \in \mathbb{N}$, let us choose Borel subsets $B_{n,0},\ldots,B_{n,k_{n}-1} \subseteq X_{n}$ such that \begin{enumerate}
	\item[$\bullet$] \, $\inf \{ d_{n}(x,y) \mid x \in B_{n,i}, \, y \in B_{n,j} \} \geq \tau$ for any two distinct $i,j \in \{ 0,\ldots,k_{n}-1 \}$,
	\item[$\bullet$] \, $\mu_{n}(B_{n,i}) \geq \tfrac{1}{k_{n}+1}$ for each $i \in \{ 0,\ldots,k_{n}-1 \}$.
\end{enumerate} Then~(0) and~(1) hold with respect to the sequence $\mathcal{B}_{n} \defeq \{ B_{n,i} \mid i \in \{ 0,\ldots,k_{n}-1 \} \}$ ($n \in \mathbb{N}$). Moreover, \begin{align*}
	& \mu_{n}\!\left( \bigcup \mathcal{B}_{n} \right) \, = \, \sum\nolimits_{i=0}^{k_{n}-1} \mu_{n}(B_{n,i}) \, \geq \, \tfrac{k_{n}}{k_{n}+1} , & \sup \{ \mu_{n}(B) \mid B \in \mathcal{B}_{n} \} \, \leq \, 1 - \tfrac{k_{n}-1}{k_{n}+1}
\end{align*} for every $n \in \mathbb{N}$, which entails that $(\mathcal{B}_{n})_{n \in \mathbb{N}}$ also satisfies~(3) and~(4).

($\Longleftarrow$) Consider an integer $m \geq 1$ and real numbers $\kappa_{0},\ldots,\kappa_{m} > 0$ such that $\sum_{i=0}^{m} \kappa_{i} < 1$. To prove that $\liminf_{n \to \infty} \mathrm{Sep}(\mathcal{X}_{n};\kappa_{0},\ldots,\kappa_{n}) \geq \delta$, consider any $\tau \in (0,\delta)$. By our hypothesis, there exists a sequence $(\mathcal{B}_{n})_{n \in \mathbb{N}}$ satisfying~(0)--(3) with respect to $(\mathcal{X}_{n})_{n \in \mathbb{N}}$ and $\tau$. Since $\mathbb{Q}^{m+1}$ is dense in $\mathbb{R}^{m+1}$, we find positive integers $q,p_{0},\ldots,p_{m}$ such that $\sum_{i=0}^{m} p_{i} < q$ and $\kappa_{i} \leq p_{i}q^{-1}$ for each $i \in \{ 0,\ldots,m \}$. Upon multiplying $q,p_{0},\ldots,p_{m}$ by a suitable positive integer, we may furthermore assume that $m+2 + \sum_{i=0}^{m}p_{i} \leq q$. By~(2) and~(3), there is $\ell \in \mathbb{N}$ such that \begin{equation}\tag{$\ast$}\label{asymptotics}
	\forall n \in \mathbb{N}_{\geq \ell} \colon \qquad \mu_{n}\!\left(\bigcup \mathcal{B}_{n}\right) \, \geq \, 1 - \tfrac{1}{q} , \qquad \sup \{ \mu_{n}(B) \mid B \in \mathcal{B}_{n} \} \, \leq \, \tfrac{1}{q} .
\end{equation} We now claim that \begin{displaymath}
	\forall n \in \mathbb{N}_{\geq \ell} \colon \qquad \mathrm{Sep}(\mathcal{X}_{n};\kappa_{0},\ldots,\kappa_{m}) \, \geq \, \tau .
\end{displaymath} Let $n \in \mathbb{N}_{\geq \ell}$. Thanks to~\eqref{asymptotics}, there exists a subset $\mathcal{C}_{0} \subseteq \mathcal{B}_{n}$ such that $\tfrac{p_{0}}{q} \leq \mu_{n}(\bigcup \mathcal{C}_{0}) < \tfrac{p_{0}+1}{q}$. Recursively, for $\{ 1,\ldots,m \}$, having chosen pairwise disjoint subsets $\mathcal{C}_{0},\ldots,\mathcal{C}_{i-1} \subseteq \mathcal{B}_{n}$ so that \begin{displaymath}
	\tfrac{p_{0}}{q} \, \leq \, \mu_{n}(\mathcal{C}_{0}) \, < \, \tfrac{p_{0}+1}{q} , \quad \ldots , \quad \tfrac{p_{i-1}}{q} \, \leq \, \mu_{n}(\mathcal{C}_{i-1}) \, < \, \tfrac{p_{i-1}+1}{q} ,
\end{displaymath} we may apply~\eqref{asymptotics} to find a subset $\mathcal{C}_{i} \subseteq \mathcal{B}_{n} \setminus (\mathcal{C}_{0} \cup \ldots \cup \mathcal{C}_{i-1})$ such that $\tfrac{p_{i}}{q} \leq \mu_{n}(\bigcup \mathcal{C}_{i}) < \tfrac{p_{i}+1}{q}$. For each $i \in \{ 0,\ldots,m \}$, we obtain a Borel subset $D_{i} \defeq \bigcup \mathcal{C}_{i} \subseteq X_{n}$ with $\mu_{n}(D_{i}) \geq p_{i}q^{-1} \geq \kappa_{i}$. Moreover, $\inf \{ d_{n}(x,y) \mid x \in D_{i}, \, y \in D_{j} \} \geq \tau$ for any two distinct $i,j \in \{ 0,\ldots,m \}$, which entails that $\mathrm{Sep}(\mathcal{X}_{n};\kappa_{0},\ldots,\kappa_{m}) \geq \tau$, as desired. This shows that $(\mathcal{X}_{n})_{n \in \mathbb{N}}$ $\delta$-dissipates.\end{proof}

As mentioned above, dissipation is a strong form of non-concentration. Making this evident will require some preliminary considerations. We start off with a fairly general fact.

\begin{lem} Let $(X,d,\mu)$ be a fully supported $mm$-space. Then $(\mathrm{Lip}_{\ell}^{s}(X,d),\mathrm{me}_{\mu})$ is a compact metric space for any two real numbers $\ell,s \geq 0$. \end{lem}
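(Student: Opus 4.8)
The plan is to verify two things: first, that $\mathrm{me}_{\mu}$ restricts to a genuine metric on $\mathrm{Lip}_{\ell}^{s}(X,d)$, and second, that this metric space is sequentially compact. Since $\mathrm{me}_{\mu}$ is symmetric and satisfies the triangle inequality on all measurable functions (the standard Ky~Fan estimate), the only non-trivial point for the first part is the separation axiom. If $\mathrm{me}_{\mu}(f,g) = 0$ for $f,g \in \mathrm{Lip}_{\ell}^{s}(X,d)$, then $f = g$ holds $\mu$-almost everywhere; as $f$ and $g$ are continuous, the set $\{x \in X \mid f(x) \neq g(x)\}$ is open, and, being a $\mu$-null set, it must be empty because $\spt \mu = X$. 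Hence $f = g$, and $\mathrm{me}_{\mu}$ is a metric on $\mathrm{Lip}_{\ell}^{s}(X,d)$.

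For compactness it suffices, the space being metric, to establish sequential compactness. Given a sequence $(f_{n})_{n \in \mathbb{N}}$ in $\mathrm{Lip}_{\ell}^{s}(X,d)$, I would exploit that $(X,d)$ is Polish, so that $\mu$ is tight (Ulam): choose an increasing sequence of compact sets $K_{j} \subseteq X$ with $\mu(X \setminus K_{j}) < \tfrac{1}{j}$ for each $j$. On every $K_{j}$ the restrictions $f_{n}\!\!\upharpoonright_{K_{j}}$ are uniformly bounded by $s$ and equi-$\ell$-Lipschitz, so the Arzel\`a--Ascoli theorem furnishes a uniformly convergent subsequence; a diagonal argument then produces a single subsequence $(f_{n_{i}})_{i \in \mathbb{N}}$ converging uniformly on each $K_{j}$. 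The resulting pointwise limit $f$ is defined and $\ell$-Lipschitz on the $\sigma$-compact full-measure set $A \defeq \bigcup_{j} K_{j}$, with $\vert f \vert \leq s$ there.

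It remains to promote $f$ to a member of $\mathrm{Lip}_{\ell}^{s}(X,d)$ and to check $\mathrm{me}_{\mu}$-convergence. I would extend $f$ by the McShane formula $x \mapsto \inf_{a \in A}(f(a) + \ell\, d(x,a))$ to an $\ell$-Lipschitz function on all of $X$ and then compose with the $1$-Lipschitz truncation $t \mapsto \max\{-s,\min\{s,t\}\}$; the result $\hat{f}$ lies in $\mathrm{Lip}_{\ell}^{s}(X,d)$ and still agrees with $f$ on $A$, since $\vert f \vert \leq s$ there. Finally, fix $\epsilon > 0$ and pick $j$ with $\tfrac{1}{j} < \epsilon$. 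Uniform convergence on $K_{j}$ yields $i_{0}$ with $\sup_{x \in K_{j}} \vert f_{n_{i}}(x) - \hat{f}(x) \vert < \epsilon$ for all $i \geq i_{0}$, whence $\{x \in X \mid \vert f_{n_{i}}(x) - \hat{f}(x) \vert > \epsilon\} \subseteq X \setminus K_{j}$ has measure below $\epsilon$; thus $\mathrm{me}_{\mu}(f_{n_{i}},\hat{f}) \leq \epsilon$ for $i \geq i_{0}$, and $(f_{n_{i}})_{i \in \mathbb{N}}$ converges to $\hat{f}$ in $\mathrm{me}_{\mu}$.

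The main obstacle is that $(X,d)$ need not be compact, so Arzel\`a--Ascoli does not directly deliver a uniform limit on all of $X$. The point is that $\mathrm{me}_{\mu}$ only registers discrepancies up to sets of small measure, so tightness reduces matters to the compact sets $K_{j}$, where equicontinuity and uniform boundedness take over; convergence in measure, being strictly weaker than uniform convergence, tolerates the vanishing-measure complements $X \setminus K_{j}$.
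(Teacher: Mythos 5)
Your proof is correct, but it runs along a genuinely different route from the paper's. The paper argues topologically: $\mathrm{Lip}_{\ell}^{s}(X,d)$ is a compact subset of the product space $\mathbb{R}^{X}$ (it is closed in $[-s,s]^{X}$) and equicontinuous, so by the Arzel\`a--Ascoli theorem in Kelley's form it is compact in the topology $\tau_{C}$ of uniform convergence on compact sets; regularity (tightness) of $\mu$ then shows that the $\mathrm{me}_{\mu}$-topology $\tau_{M}$ is coarser than $\tau_{C}$, and compactness in $\tau_{M}$ follows at once, with no need ever to exhibit a limit function. You instead prove sequential compactness by hand: Ulam tightness gives the exhausting compacts $K_{j}$, the classical Arzel\`a--Ascoli theorem plus a diagonal argument gives a subsequence converging uniformly on each $K_{j}$, and --- this is the step the paper's approach avoids entirely --- you must then re-enter the space $\mathrm{Lip}_{\ell}^{s}(X,d)$, which you do correctly via the McShane extension $x \mapsto \inf_{a \in A}\bigl(f(a) + \ell\, d(x,a)\bigr)$ followed by truncation at $\pm s$; the final estimate $\mathrm{me}_{\mu}(f_{n_{i}},\hat{f}) \leq \epsilon$ is right, since the exceptional set sits inside $X \setminus K_{j}$. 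What each approach buys: the paper's is shorter and the limit automatically lies in the space, at the cost of invoking the general (non-compact domain) form of Ascoli's theorem and a comparison of topologies; yours is more elementary and self-contained, needing only the compact-domain Arzel\`a--Ascoli theorem and Ulam's theorem, and it produces the limit explicitly, at the cost of the diagonalization and the extension step --- which, note, is not optional: without promoting $f$ from the full-measure set $A$ to a genuine element of $\mathrm{Lip}_{\ell}^{s}(X,d)$, sequential compactness would not be established. Both arguments rest on the same two pillars (equi-Lipschitz boundedness and tightness of $\mu$), and your identification of the separation axiom as the only nontrivial metric axiom, settled by full support plus continuity, matches the paper's one-line justification.
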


\begin{proof} Note that $\mathrm{me}_{\mu}$ is a metric on $\mathrm{Lip}_{\ell}^{s}(X,d)$, because $\spt \mu = X$. Since $\mathrm{Lip}_{\ell}^{s}(X,d)$ is a compact subset of the product space $\mathbb{R}^{X}$ and equicontinuous, the Arzel\`{a}-Ascoli theorem, in the form of~\cite[7.15, pp.~232]{kelley}, asserts that $\mathrm{Lip}_{\ell}^{s}(X,d)$ is compact with respect to the topology $\tau_{C}$ of uniform convergence on compact subsets of $X$. To prove compactness of the metric space $(\mathrm{Lip}_{\ell}^{s}(X,d),\mathrm{me}_{\mu})$, it thus suffices to show that the topology $\tau_{M}$ generated by the metric $\mathrm{me}_{\mu}$ on $\mathrm{Lip}_{\ell}^{s}(X,d)$ is contained in $\tau_{C}$. To this end, let $U \in \tau_{M}$. Consider any $f \in U$. As $U \in \tau_{M}$, there exists $\epsilon > 0$ such that $B_{\mathrm{me}_{\mu}}(f,\epsilon) \subseteq U$. Being a Borel probability measure on a Polish space, $\mu$ must be tight (see, e.g.,~\cite[Chapter~II, Theorem~3.2]{part}). Hence, there exists a compact subset $K \subseteq X$ with $\mu (K) > 1-\epsilon$. In turn, \begin{displaymath}
	\{ g \in \mathrm{Lip}_{\ell}^{s}(X,d) \mid \sup\nolimits_{x \in K} \vert f(x) - g(x) \vert < \epsilon \} \, \subseteq \, B_{\mathrm{me}_{\mu}}(f,\epsilon) \, \subseteq \, U ,
\end{displaymath} which entails that $U$ is a neighborhood of $f$ in $\tau_{C}$. This shows that $U \in \tau_{C}$. Thus, $\tau_{M} \subseteq \tau_{C}$ as desired. (In fact, $\tau_{M} = \tau_{C}$ since $\tau_{M}$ is Hausdorff and $\tau_{C}$ is compact.) \end{proof}

Our next observation relates the observable distance of $mm$-spaces with the Gromov-Hausdorff distance of the corresponding spaces of bounded Lipschitz functions.

\begin{lem}\label{lemma:gromov.vs.hausdorff} Let $\mathcal{X}_{0} = (X_{0},d_{0},\mu_{0})$ and $\mathcal{X}_{1} = (X_{1},d_{1},\mu_{1})$ be two fully supported $mm$-spaces. For any two real numbers $\ell \geq 1$ and $s \geq 0$, \begin{displaymath}
	d_{\mathrm{GH}}\bigl(\bigl(\mathrm{Lip}_{\ell}^{s}(X_{0},d_{0}),\mathrm{me}_{\mu_{0}}\bigr),\bigl(\mathrm{Lip}_{\ell}^{s}(X_{1},d_{1}),\mathrm{me}_{\mu_{1}}\bigr)\bigr) \, \leq \, \ell d_{\mathrm{conc}}(\mathcal{X}_{0},\mathcal{X}_{1}) .
\end{displaymath} \end{lem}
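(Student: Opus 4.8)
The plan is to fix an arbitrary pair of parametrizations and realize both function spaces as isometric copies inside a single ambient space, so that the Gromov--Hausdorff distance is dominated by a Hausdorff distance computed there; a scaling-and-truncation device then produces the factor $\ell$ and reduces everything to the $\mathrm{Lip}_1$-Hausdorff distance occurring in the definition of $d_{\mathrm{conc}}$.

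First I would fix parametrizations $\phi$ of $\mathcal{X}_0$ and $\psi$ of $\mathcal{X}_1$ and work inside the pseudo-metric space $(\mathcal{L},\mathrm{me}_\lambda)$ of Borel measurable functions $[0,1]\to\mathbb{R}$ (passing to its metric quotient by $\lambda$-a.e.\ equality if one insists on a genuine metric space, as the definition of $d_{\mathrm{GH}}$ formally requires). The key computation is that $f\mapsto f\circ\phi$ preserves distances: since $\phi_{\ast}(\lambda)=\mu_0$, the set $\{t\mid |f(\phi(t))-g(\phi(t))|>\epsilon\}$ equals $\phi^{-1}$ of $\{x\mid |f(x)-g(x)|>\epsilon\}$, whence $\mathrm{me}_\lambda(f\circ\phi,g\circ\phi)=\mathrm{me}_{\mu_0}(f,g)$, and this is a genuine metric because $\mu_0$ is fully supported. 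Thus $f\mapsto f\circ\phi$ and $g\mapsto g\circ\psi$ are isometric embeddings of $(\mathrm{Lip}_\ell^s(X_0,d_0),\mathrm{me}_{\mu_0})$ and $(\mathrm{Lip}_\ell^s(X_1,d_1),\mathrm{me}_{\mu_1})$ into $(\mathcal{L},\mathrm{me}_\lambda)$, which immediately yields
\[ d_{\mathrm{GH}}\bigl((\mathrm{Lip}_\ell^s(X_0,d_0),\mathrm{me}_{\mu_0}),(\mathrm{Lip}_\ell^s(X_1,d_1),\mathrm{me}_{\mu_1})\bigr)\,\le\,\mathrm{H}_{\mathrm{me}_\lambda}\bigl(\mathrm{Lip}_\ell^s(X_0,d_0)\circ\phi,\ \mathrm{Lip}_\ell^s(X_1,d_1)\circ\psi\bigr). \]

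The heart of the argument is to bound this right-hand side by $\ell\eta$, where $\eta\defeq\mathrm{H}_{\mathrm{me}_\lambda}(\mathrm{Lip}_1(\mathcal{X}_0)\circ\phi,\mathrm{Lip}_1(\mathcal{X}_1)\circ\psi)$ is the quantity whose infimum over parametrizations defines $d_{\mathrm{conc}}(\mathcal{X}_0,\mathcal{X}_1)$. Fix $\epsilon>\eta$ and $f\in\mathrm{Lip}_\ell^s(X_0,d_0)$. Then $\ell^{-1}f\in\mathrm{Lip}_1^{s/\ell}(X_0,d_0)\subseteq\mathrm{Lip}_1(\mathcal{X}_0)$, so by choice of $\eta$ there is $g\in\mathrm{Lip}_1(\mathcal{X}_1)$ with $\mathrm{me}_\lambda(\ell^{-1}f\circ\phi,\,g\circ\psi)<\epsilon$. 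To force $g$ back into the bounded, $\ell$-Lipschitz regime I would truncate: let $T\colon\mathbb{R}\to[-s/\ell,s/\ell]$ be the nearest-point retraction and set $\tilde g\defeq T\circ g$. Truncation is $1$-Lipschitz, so $\tilde g\in\mathrm{Lip}_1^{s/\ell}(X_1,d_1)$; and since $|\ell^{-1}f|\le s/\ell$ pointwise, $T$ fixes every value of $\ell^{-1}f$, so $|\tilde g(\psi(t))-\ell^{-1}f(\phi(t))|\le|g(\psi(t))-\ell^{-1}f(\phi(t))|$ for all $t$, and this pointwise inequality gives $\mathrm{me}_\lambda(\ell^{-1}f\circ\phi,\tilde g\circ\psi)\le\mathrm{me}_\lambda(\ell^{-1}f\circ\phi,g\circ\psi)<\epsilon$. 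Finally put $h\defeq\ell\tilde g\in\mathrm{Lip}_\ell^s(X_1,d_1)$. Using the elementary scaling inequality $\mathrm{me}_\lambda(\ell u,\ell v)\le\ell\,\mathrm{me}_\lambda(u,v)$, valid for $\ell\ge 1$ (here $\{|\ell u-\ell v|>\ell\delta\}=\{|u-v|>\delta\}$ has measure $\le\delta\le\ell\delta$), I obtain $\mathrm{me}_\lambda(f\circ\phi,h\circ\psi)\le\ell\,\mathrm{me}_\lambda(\ell^{-1}f\circ\phi,\tilde g\circ\psi)<\ell\epsilon$. By symmetry the same construction works starting from $\mathcal{X}_1$, so $\mathrm{H}_{\mathrm{me}_\lambda}(\mathrm{Lip}_\ell^s(X_0,d_0)\circ\phi,\mathrm{Lip}_\ell^s(X_1,d_1)\circ\psi)\le\ell\epsilon$.

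Letting $\epsilon\downarrow\eta$ and then taking the infimum over all parametrizations $\phi,\psi$ gives the asserted bound $d_{\mathrm{GH}}(\dots)\le\ell\,d_{\mathrm{conc}}(\mathcal{X}_0,\mathcal{X}_1)$. I expect the main obstacle to be precisely the step replacing $g\in\mathrm{Lip}_1(\mathcal{X}_1)$ by an admissible competitor $h\in\mathrm{Lip}_\ell^s(X_1,d_1)$ without inflating the $\mathrm{me}_\lambda$-distance by more than the factor $\ell$: one must simultaneously restore the prescribed Lipschitz constant $\ell$ (via scaling, where the hypothesis $\ell\ge 1$ is exactly what makes the $\mathrm{me}$-inequality go in the right direction) and the sup-bound $s$ (via truncation, where the crucial observation is that $\ell^{-1}f$ already takes values in the target interval, so truncating $g$ can only decrease the pointwise discrepancy). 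The embedding and ambient-space bookkeeping, including the harmless passage to a metric quotient, should be routine by comparison.
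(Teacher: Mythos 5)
Your proposal is correct and follows essentially the same route as the paper: embed both function spaces via parametrizations into a common $\mathrm{me}_\lambda$-space, then bound the resulting Hausdorff distance by $\ell$ times the $\mathrm{Lip}_1$-Hausdorff distance using the competitor obtained from scaling and truncating a $1$-Lipschitz approximant of $\ell^{-1}f$. Your truncate-then-scale order produces literally the same function as the paper's scale-then-truncate competitor $h = ((\ell g)\wedge s)\vee(-s)$, since $\ell\bigl(\bigl(g \wedge \tfrac{s}{\ell}\bigr)\vee\bigl(-\tfrac{s}{\ell}\bigr)\bigr) = ((\ell g)\wedge s)\vee(-s)$, so the difference is purely cosmetic.
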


\begin{proof} For each $i \in \{ 0,1 \}$, consider an arbitrary parametrization $\phi_{i}$ of $\mathcal{X}_{i}$. Let \begin{displaymath}
	Z \defeq \left(\mathrm{Lip}^{s}_{\ell}(X_{0},d_{0}) \circ \phi_{0}\right) \cup \left(\mathrm{Lip}^{s}_{\ell}(X_{1},d_{1}) \circ \phi_{1}\right) .
\end{displaymath} For each $i \in \{ 0,1 \}$, since $\mu_{i} = (\phi_{i})_{\ast}(\lambda)$, the map \begin{displaymath}
	\Phi_{i} \colon (\mathrm{Lip}^{s}_{\ell}(X_{i},d_{i}),\mathrm{me}_{\mu_{i}} ) \, \longrightarrow \, (Z,\mathrm{me}_{\lambda}) , \quad f \, \longmapsto \, f \circ \phi_{i}
\end{displaymath} is an isometric embedding, cf.~\cite[Lemma~5.31(1)]{ShioyaBook}. Furthermore, \begin{displaymath}
	\mathrm{H}_{\mathrm{me}_{\lambda}}(\mathrm{Lip}^{s}_{\ell}(X_{0},d_{0}) \circ \phi_{0},\mathrm{Lip}^{s}_{\ell}(X_{1},d_{1})\circ \phi_{1}) \leq \ell \mathrm{H}_{\mathrm{me}_{\lambda}}(\mathrm{Lip}_{1}(X_{0},d_{0}) \circ \phi_{0},\mathrm{Lip}_{1}(X_{1},d_{1})\circ \phi_{1}) .
\end{displaymath} Indeed, if $\mathrm{H}_{\mathrm{me}_{\lambda}}(\mathrm{Lip}_{1}(X_{0},d_{0}) \circ \phi_{0},\mathrm{Lip}_{1}(X_{1},d_{1})\circ \phi_{1}) < \delta$ for some number $\delta \in \mathbb{R}$, and we let $\{ i,j\} = \{ 0,1 \}$ and $f \in \mathrm{Lip}^{s}_{\ell}(X_{i},d_{i})$, then $\ell^{-1}f \in \mathrm{Lip}_{1}(X_{i},d_{i})$, thus there is $g \in \mathrm{Lip}_{1}(X_{j},d_{j})$ with $\mathrm{me}_{\lambda}((\ell^{-1}f) \circ \phi_{i},g \circ \phi_{j}) < \delta$, wherefore $h \defeq ((\ell g) \wedge s) \vee (-s) \in \mathrm{Lip}^{s}_{\ell}(X_{j},d_{j})$ and \begin{displaymath}
	\mathrm{me}_{\lambda}(f \circ \phi_{i},h \circ \phi_{j}) \, \leq \, \mathrm{me}_{\lambda}(f \circ \phi_{i},(\ell g) \circ \phi_{j}) \, \leq \, \ell \mathrm{me}_{\lambda}((\ell^{-1}f) \circ \phi_{i},g \circ \phi_{j}) \, < \, \ell \delta ,
\end{displaymath} which shows that $\mathrm{H}_{\mathrm{me}_{\lambda}}(\mathrm{Lip}^{s}_{\ell}(X_{0},d_{0}) \circ \phi_{0},\mathrm{Lip}^{s}_{\ell}(X_{1},d_{1})\circ \phi_{1}) \leq \ell \delta$. Consequently, \begin{align*}
	d_{\mathrm{GH}}\bigl(\bigl(\mathrm{Lip}_{\ell}^{s}(X_{0},d_{0}),\mathrm{me}_{\mu_{0}}\bigr),\bigl(\mathrm{Lip}_{\ell}^{s}&(X_{1},d_{1}),\mathrm{me}_{\mu_{1}}\bigr)\bigr) \\
	&\leq \, \mathrm{H}_{\mathrm{me}_{\lambda}}(\Phi_{0}(\mathrm{Lip}^{s}_{\ell}(X_{0},d_{0})),\Phi_{1}(\mathrm{Lip}^{s}_{\ell}(X_{1},d_{1}))) \\
	&= \,  \mathrm{H}_{\mathrm{me}_{\lambda}}(\mathrm{Lip}^{s}_{\ell}(X_{0},d_{0}) \circ \phi_{0},\mathrm{Lip}^{s}_{\ell}(X_{1},d_{1})\circ \phi_{1}) \\
	&\leq \, \ell \mathrm{H}_{\mathrm{me}_{\lambda}}(\mathrm{Lip}_{1}(X_{0},d_{0}) \circ \phi_{0},\mathrm{Lip}_{1}(X_{1},d_{1})\circ \phi_{1}) .
\end{align*} According to the definition of $d_{\mathrm{conc}}$, this completes the proof. \end{proof}

Our proof of Lemma~\ref{lemma:capacity} will involve Rademacher functions. Recall that, for any $n \in \mathbb{N}\setminus\{ 0 \}$, the \emph{$n$-th Rademacher function} is defined as \begin{displaymath}
	r_{n} \colon [0,1) \, \longrightarrow \, \{ -1, 1 \}, \quad t \, \longmapsto \, (-1)^{\lfloor 2^{n}t \rfloor} ,
\end{displaymath}  that is, $r_{n}(t) = (-1)^{k}$ whenever $t \in [2^{-n}k,2^{-n}(k+1))$ and $k \in \{ 0,\ldots,2^{n}-1 \}$. Let us note a well-known, elementary fact about this family of functions. Given any $n \in \mathbb{N}\setminus \{ 0 \}$, we will henceforth abbreviate $T_{n} \defeq \{ 2^{-n}k \mid k \in \{ 0,\ldots,2^{n}-1 \} \}$.

\begin{lem}\label{lemma:rademacher} Let $n \in \mathbb{N}\setminus \{ 0 \}$. For any two distinct $i,j \in \{ 1,\ldots,n \}$, \begin{displaymath}
	\vert \{ t \in T_{n} \mid r_{i}(t) = r_{j}(t) \} \vert \, = \, 2^{n-1} \, = \, \tfrac{\vert T_{n} \vert}{2} .
\end{displaymath} \end{lem}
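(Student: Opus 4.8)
The plan is to reduce the statement to an elementary counting problem on the binary digits of an integer index. First I would index the points of $T_n$ by writing $t = 2^{-n}k$ with $k \in \{0,\ldots,2^n-1\}$, so that $\vert T_n \vert = 2^n$ and it suffices to count the integers $k$ in this range for which $r_i(2^{-n}k) = r_j(2^{-n}k)$.

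The key computation is to identify $r_i(2^{-n}k)$ with the sign determined by a single binary digit of $k$. For $i \in \{1,\ldots,n\}$ one has $2^{i} \cdot 2^{-n}k = 2^{-(n-i)}k$, hence $r_i(2^{-n}k) = (-1)^{\lfloor k/2^{n-i}\rfloor}$. Writing $k = \sum_{m=0}^{n-1} b_m 2^m$ with digits $b_m \in \{0,1\}$, the integer $\lfloor k/2^{n-i}\rfloor = \sum_{m=n-i}^{n-1} b_m 2^{m-(n-i)}$ has $b_{n-i}$ as its lowest-order binary digit, so its parity equals $b_{n-i}$. Therefore $r_i(2^{-n}k) = (-1)^{b_{n-i}}$; in other words, the $i$-th Rademacher function, sampled at $2^{-n}k$, simply reads off the digit of $k$ at position $n-i$.

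With this identification in hand the conclusion is immediate. Since $i \neq j$ and $i,j \in \{1,\ldots,n\}$, the positions $n-i$ and $n-j$ are two distinct elements of $\{0,\ldots,n-1\}$, and the equality $r_i(2^{-n}k) = r_j(2^{-n}k)$ holds exactly when $b_{n-i} = b_{n-j}$. As $k$ ranges over $\{0,\ldots,2^n-1\}$, the digit vector $(b_0,\ldots,b_{n-1})$ ranges over all of $\{0,1\}^n$, and the number of such vectors with two prescribed distinct coordinates equal is $2 \cdot 2^{n-2} = 2^{n-1}$ (two choices for the common value of the two constrained coordinates, together with a free choice of the remaining $n-2$ coordinates). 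This yields $\vert \{ t \in T_n \mid r_i(t) = r_j(t) \} \vert = 2^{n-1} = \tfrac{\vert T_n \vert}{2}$, as claimed.

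The only genuinely delicate point is the digit bookkeeping in the second step: one must verify carefully that dividing $k$ by $2^{n-i}$ and then taking the floor discards exactly the digits $b_0,\ldots,b_{n-i-1}$ and leaves $b_{n-i}$ as the parity-determining bit, and that the index $n-i$ indeed lies in $\{0,\ldots,n-1\}$ for every admissible $i$. Once this identification between Rademacher signs and binary digits is secured, the result is purely a count of binary strings and presents no further obstacle.
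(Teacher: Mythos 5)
Your proof is correct, but it takes a genuinely different route from the paper's. You identify the Rademacher value $r_i(2^{-n}k)$ with the binary digit $b_{n-i}$ of the index $k$: the identity $r_i(2^{-n}k) = (-1)^{\lfloor k/2^{n-i}\rfloor}$ is right, the parity of $\lfloor k/2^{n-i}\rfloor$ is indeed $b_{n-i}$ (all higher digits contribute even terms), and $n-i$ lies in $\{0,\ldots,n-1\}$ for every $i \in \{1,\ldots,n\}$, so the bookkeeping you flag as the delicate step is sound. The count then reduces to the number of length-$n$ binary strings with two prescribed distinct coordinates equal, namely $2 \cdot 2^{n-2} = 2^{n-1}$, which is well-defined since distinctness of $i,j$ forces $n \geq 2$. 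The paper argues differently: assuming $i < j$, it partitions $T_n$ into the dyadic blocks $[x,\sigma_{T_i}(x))_{T_n}$ indexed by $x \in T_i$, exploits that $r_i$ is constant on each block while $r_j$ alternates on the sub-blocks at scale $2^{-j}$, and multiplies the resulting counts, $2^{n-j} \cdot 2^{j-i-1} \cdot 2^{i} = 2^{n-1}$. Your digit-reading argument is arguably more conceptual: it exhibits the sampled Rademacher functions as independent coordinate functions on $\{0,1\}^n$, from which the count, and natural generalizations such as prescribing the joint values of several $r_i$'s simultaneously, follows immediately. The paper's nested-interval argument buys self-containedness in its own notation at the price of the two-step block decomposition and the associated interval bookkeeping.
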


\begin{proof} We include a proof for the sake of completeness. Let us briefly agree on some convenient notation: given a finite subset $T \subseteq [0,1)$, let $[x,y)_{T} \defeq \{ t \in T \mid x \leq t < y \}$ for $x,y \in [0,1]$, and define $\sigma_{T} \colon [0,1) \to [0,1], \, x \mapsto \min \{ t \in T \cup \{ 1 \} \mid x < t \}$. Without loss of generality, we may assume that $i < j$. For every $x \in T_{i}$, \begin{enumerate}
	\item[$(1)$] \ $\vert \{ t \in [x,\sigma_{T_{i}}(x))_{T_{n}} \! \mid r_{i}(t) = r_{j}(t) \} \vert \, = \, 2^{n-j}\vert \{ y \in [x,\sigma_{T_{i}}(x))_{T_{j}} \! \mid r_{i}(y) = r_{j}(y) \} \vert$, \smallskip
	\item[$(2)$] \ $\vert \{ y \in [x,\sigma_{T_{i}}(x))_{T_{j}} \! \mid r_{i}(y) = r_{j}(y) \} \vert \, = \, 2^{j-i-1}$.
\end{enumerate} In order to prove~(1) and~(2), let $x \in T_{i}$. Then~(2) follows by observing that $r_{i}$ is constant on the $2^{j-i}$-element set $[x,\sigma_{T_{i}}(x))_{T_{j}}$, whereas $r_{j}(\sigma_{T_{j}}(y)) = - r_{j}(y)$ for every $y \in [x,\sigma_{T_{i}}(x))_{T_{j}}$. With regards to~(1), we note that $[x,\sigma_{T_{i}}(x))_{T_{n}} = \bigcupdot \{ [y,\sigma_{T_{j}}(y))_{T_{n}} \! \mid y \in [x,\sigma_{T_{i}}(x))_{T_{j}} \}$ and that both $r_{i}$ and $r_{j}$ are constant on each of the $2^{n-j}$-element sets $[y,\sigma_{T_{j}}(y))_{T_{n}}$ ($y \in [x,\sigma_{T_{i}}(x))_{T_{j}}$), which entails that \begin{align*}
	\vert \{ t \in [x,\sigma_{T_{i}}(x))_{T_{n}} \! \mid r_{i}(t) = r_{j}(t) \} \vert \, & = \, \sum\nolimits_{y \in [x,\sigma_{T_{i}}(x))_{T_{j}} \!} \vert \{ t \in [y,\sigma_{T_{j}}(y))_{T_{n}} \! \mid r_{i}(t) = r_{j}(t) \} \vert \\
	&= \, 2^{n-j} \vert \{ y \in [x,\sigma_{T_{i}}(x))_{T_{j}} \! \mid r_{i}(y) = r_{j}(y) \} \vert .
\end{align*} Combining~(1) and~(2) with the fact that $T_{n} = \bigcupdot \{ [x,\sigma_{T_{i}}(x))_{T_{n}} \! \mid x \in T_{i} \}$, we arrive at \begin{align*}
	\vert \{ t \in T_{n} \mid r_{i}(t) = r_{j}(t) \} \vert \, & = \, \sum\nolimits_{x \in T_{i}} \vert \{ t \in [x,\sigma_{T_{i}}(x))_{T_{n}} \! \mid r_{i}(t) = r_{j}(t) \} \vert \\
	&\stackrel{(1)}{=} \, 2^{n-j}\sum\nolimits_{x \in T_{i}} \vert \{ y \in [x,\sigma_{T_{i}}(x))_{T_{j}} \! \mid r_{i}(y) = r_{j}(y) \} \vert \\
	&\stackrel{(2)}{=} \, 2^{n-j}2^{j-i-1}\vert T_{i} \vert \, = \, 2^{n-1} \, = \, \tfrac{\vert T_{n} \vert}{2} .\qedhere
\end{align*} \end{proof}

Now we are prepared for proving the key Lemma~\ref{lemma:capacity}.

\begin{lem}\label{lemma:capacity} Let $\mathcal{X} = (X,d,\mu)$ be a fully supported $mm$-space. For all $n \in \mathbb{N}\setminus \{ 0 \}$, $\delta \in (0,\infty)$, $\epsilon \in (0,1)$ and $\tau \in \bigl(0,\tfrac{1}{2}\bigr)$, \begin{displaymath}
	\mathrm{Sep}_{2^{n}-1}(\mathcal{X};(1-\epsilon)2^{-n}) \, \geq \, \delta \quad \Longrightarrow \quad \mathrm{Cap}_{(1 - \epsilon)\tau}\!\left(\mathrm{Lip}_{\delta^{-1}}^{1}(X,d),\mathrm{me}_{\mu}\right) \, \geq \, n .
\end{displaymath} \end{lem}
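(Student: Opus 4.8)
The plan is to deduce the capacity bound by exhibiting, under the separation hypothesis, an explicit $(1-\epsilon)\tau$-discrete subset of $\bigl(\mathrm{Lip}_{\delta^{-1}}^{1}(X,d),\mathrm{me}_{\mu}\bigr)$ of cardinality $n$. Its members will be $n$ normalized ``signed-distance'' functions $f_{1},\dots,f_{n}$, one attached to each Rademacher function $r_{1},\dots,r_{n}$ and all built on top of a single family of $2^{n}$ well-separated Borel sets of large measure furnished by the hypothesis; the combinatorial reason that distinct functions end up far apart will be Lemma~\ref{lemma:rademacher}.

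First I would fix a real number $\delta' \in \bigl(\delta(1-\epsilon)\tau,\delta\bigr)$, which is possible since $(1-\epsilon)\tau<1$, and set $c\defeq\tfrac{\delta'}{2\delta}$, so that $2c=\delta'/\delta\in\bigl((1-\epsilon)\tau,1\bigr)$. As $\mathrm{Sep}_{2^{n}-1}(\mathcal{X};(1-\epsilon)2^{-n})\geq\delta>\delta'$, the defining supremum provides Borel sets $B_{0},\dots,B_{2^{n}-1}\subseteq X$ with $\mu(B_{k})\geq(1-\epsilon)2^{-n}$ for each $k$ and with $d(x,y)>\delta'$ whenever $x\in B_{k}$, $y\in B_{k'}$ and $k\neq k'$; in particular the $B_{k}$ are pairwise disjoint, and I index them by $T_{n}$ through $k\leftrightarrow 2^{-n}k$. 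For each $i\in\{1,\dots,n\}$ I would then put $P_{i}\defeq\bigcup\{B_{k}\mid r_{i}(2^{-n}k)=1\}$ and $N_{i}\defeq\bigcup\{B_{k}\mid r_{i}(2^{-n}k)=-1\}$ and define $f_{i}(x)\defeq\bigl(\delta^{-1}d(x,N_{i})\wedge 2c\bigr)-c$. This $f_{i}$ is $\delta^{-1}$-Lipschitz and satisfies $|f_{i}|\leq c<1$, so $f_{i}\in\mathrm{Lip}_{\delta^{-1}}^{1}(X,d)$; moreover $f_{i}\equiv-c$ on $N_{i}$, while $d(P_{i},N_{i})\geq\delta'$ forces $\delta^{-1}d(x,N_{i})\geq 2c$ and hence $f_{i}\equiv c$ on $P_{i}$.

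Next I would estimate $\mathrm{me}_{\mu}(f_{i},f_{j})$ for distinct $i,j$. On the set $D\defeq\bigcup\{B_{k}\mid r_{i}(2^{-n}k)\neq r_{j}(2^{-n}k)\}$ the two functions take opposite values $\pm c$, so $|f_{i}-f_{j}|\equiv 2c$ there; Lemma~\ref{lemma:rademacher} says exactly $2^{n-1}$ indices contribute, whence disjointness gives $\mu(D)\geq 2^{n-1}(1-\epsilon)2^{-n}=\tfrac{1-\epsilon}{2}$. Thus $\mu(\{|f_{i}-f_{j}|>s\})\geq\tfrac{1-\epsilon}{2}$ for every $s<2c$, so for each $s<\min\bigl(2c,\tfrac{1-\epsilon}{2}\bigr)$ one has $\mu(\{|f_{i}-f_{j}|>s\})\geq\tfrac{1-\epsilon}{2}>s$, meaning $s$ violates the condition defining $\mathrm{me}_{\mu}$. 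Hence $\mathrm{me}_{\mu}(f_{i},f_{j})\geq\min\bigl(2c,\tfrac{1-\epsilon}{2}\bigr)>(1-\epsilon)\tau$, the final inequality using $2c>(1-\epsilon)\tau$ together with $\tau<\tfrac{1}{2}$. Consequently $\{f_{1},\dots,f_{n}\}$ is a $(1-\epsilon)\tau$-discrete set of size $n$, giving $\mathrm{Cap}_{(1-\epsilon)\tau}\bigl(\mathrm{Lip}_{\delta^{-1}}^{1}(X,d),\mathrm{me}_{\mu}\bigr)\geq n$.

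I expect the one genuinely delicate point to be reconciling two competing demands on the $f_{i}$: each should jump by a full $2c$ between $N_{i}$ and $P_{i}$ in order to create a large $\mathrm{me}_{\mu}$-gap, yet must remain $\delta^{-1}$-Lipschitz even though these sets are only guaranteed to be more than $\delta'$ apart with $\delta'<\delta$. The resolution is to tie the amplitude to the available separation via $c=\delta'/(2\delta)$ and to push $\delta'$ close enough to $\delta$ that $2c=\delta'/\delta$ still exceeds $(1-\epsilon)\tau$; the hypotheses $\epsilon<1$ and $\tau<\tfrac{1}{2}$ are precisely what make both thresholds $2c$ and $\tfrac{1-\epsilon}{2}$ strictly larger than $(1-\epsilon)\tau$, which is what upgrades the estimate to the strict discreteness the capacity bound requires.
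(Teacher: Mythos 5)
Your proof is correct and follows essentially the same route as the paper: both arguments index a family of $n$ Lipschitz functions by the Rademacher functions $r_{1},\ldots,r_{n}$, build them over the $2^{n}$ mutually separated Borel sets of measure at least $(1-\epsilon)2^{-n}$, invoke Lemma~\ref{lemma:rademacher} to produce, for each pair $i\ne j$, a disagreement set of measure at least $\tfrac{1-\epsilon}{2}$, and obtain strict $(1-\epsilon)\tau$-discreteness from $\tau<\tfrac{1}{2}$. The paper realizes the functions as $\{0,1\}$-valued maps on $B=\bigcup_{k}B_{k}$, constant on each $B_{k}$, extended to $X$ by the formula $f_{i}^{\ast}(x)=\bigl(\inf_{y\in B}f_{i}(y)+\delta^{-1}d(x,y)\bigr)\wedge 1$; your truncated distance functions $f_{i}=\bigl(\delta^{-1}d(\cdot,N_{i})\wedge 2c\bigr)-c$ are, up to rescaling and recentering, the same objects. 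The one genuine difference is in your favour: the paper extracts from $\mathrm{Sep}_{2^{n}-1}(\mathcal{X};(1-\epsilon)2^{-n})\geq\delta$ a tuple of Borel sets with mutual separation $\geq\delta$, which is not literally warranted, since the supremum in the definition of $\mathrm{Sep}$ need not be attained. Your device of fixing $\delta'\in\bigl(\delta(1-\epsilon)\tau,\delta\bigr)$, taking sets with separation $>\delta'$, and shrinking the amplitude to $2c=\delta'/\delta$ (so the functions stay $\delta^{-1}$-Lipschitz) handles exactly this point, at the harmless cost of replacing the lower bound $\tfrac{1-\epsilon}{2}$ by $\min\bigl(2c,\tfrac{1-\epsilon}{2}\bigr)$, which still exceeds $(1-\epsilon)\tau$; so your write-up closes a small gap in the paper's proof rather than opening one.
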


\begin{proof} Assume that $\mathrm{Sep}_{2^{n}-1}(\mathcal{X};(1-\epsilon)2^{-n}) \geq \delta$. Then there exist (necessarily disjoint) Borel subsets $B_{0},\ldots,B_{2^{n}-1} \subseteq X$ such that \begin{enumerate}
	\item[$(1)$] \, $\mu (B_{i}) \geq (1-\epsilon)2^{-n}$ for each $i \in \{ 0,\ldots,2^{n}-1\}$, and \smallskip
	\item[$(2)$] \, $\inf \{ d(x,y) \mid x \in B_{i}, \, y \in B_{j} \} \geq \delta$ for any two distinct $i,j \in \{ 0,\ldots,2^{n} -1 \}$.
\end{enumerate} Consider the Borel set $B \defeq \bigcup_{i=0}^{2^{n}-1} B_{i} \subseteq X$, and note that $\mu (B) \geq 1-\epsilon$, as follows from~(1) and the fact that $B_{0},\ldots,B_{2^{n}-1}$ are pairwise disjoint. Let $\pi \colon B \to \{ 0,\ldots,2^{n}-1\}$ be the unique map with $\pi^{-1}(i) = B_{i}$ for all $i \in \{ 0,\ldots,2^{n}-1 \}$. For each $i \in \{ 1,\ldots,n \}$, consider the $i$-th Rademacher function $r_{i} \colon [0,1) \to \{ -1,1 \}$ and let \begin{displaymath}
	f_{i} \colon B \, \longrightarrow \, \{ 0,1\} , \quad x \, \longmapsto \, \tfrac{1}{2}(1 + r_{i}(2^{-n}\pi(x))) .
\end{displaymath} As each of the functions $f_{1},\ldots,f_{n} \colon B \to \{ 0,1 \}$ is constant on each of the sets $B_{0},\ldots,B_{2^{n}-1}$, assertion~(2) implies that $\{ f_{1},\ldots,f_{n} \} \subseteq \mathrm{Lip}_{\delta^{-1}}^{1}(B,d\!\!\upharpoonright_{B})$. Utilizing a standard construction, for each $i \in \{ 1,\ldots,n \}$ we define \begin{displaymath}
	f^{\ast}_{i} \colon X \, \longrightarrow \, [0,1], \quad x \, \longmapsto \, \left(\inf\nolimits_{y \in B} f_{i}(y) + \delta^{-1}d(x,y)\right)\wedge 1 
\end{displaymath} and observe that $f^{\ast}_{i} \in \mathrm{Lip}_{\delta^{-1}}^{1}(X,d)$ and $f^{\ast}_{i}\vert_{B} = f_{i}$. Define $T_{n} \defeq \{ 2^{-n}k \mid k \in \{ 0,\ldots,2^{n}-1\} \}$ as in Lemma~\ref{lemma:rademacher}. For any two distinct $i,j \in \{ 1,\ldots,n \}$, we consider the Borel set \begin{align*}
	N_{ij} \, &\defeq \, \{ x \in B \mid \vert f_{i}(x) - f_{j}(x)\vert = 1 \} \, = \, \{ x \in B \mid f_{i}(x) \ne f_{j}(x) \} \\
	&\qquad \qquad \qquad \qquad \qquad \qquad \qquad \qquad = \, \bigcup \{ B_{2^{n}t} \mid t \in T_{n} , \, r_{i}(t) \ne r_{j}(t) \}
\end{align*} and conclude that $\mu (N_{ij}) \geq 2^{n-1}(1-\epsilon)2^{-n} = (1-\epsilon)2^{-1}$, taking into account assertion~(1), the pairwise disjointness of $B_{0},\ldots,B_{2^{n}-1}$, and Lemma~\ref{lemma:rademacher}. It follows that \begin{displaymath}
	\mathrm{me}_{\mu}(f^{\ast}_{i},f^{\ast}_{j}) \, \geq \, \mu (N_{ij}) \, \geq \, 2^{-1}(1-\epsilon) \, > \, (1-\epsilon)\tau
\end{displaymath} for any two distinct $i,j \in \{ 1,\ldots,n \}$, i.e., $\{ f^{\ast}_{1},\ldots,f^{\ast}_{n} \}$ is $(1-\epsilon)\tau$-discrete in $\!\left(\mathrm{Lip}_{\delta^{-1}}^{1}(X,d),\mathrm{me}_{\mu}\right)\!$. Thus, $\mathrm{Cap}_{(1 - \epsilon)\tau}\!\left(\mathrm{Lip}_{\delta^{-1}}^{1}(X,d),\mathrm{me}_{\mu}\right) \geq n$ as desired. \end{proof}

Everything is prepared to show that dissipation does indeed constitute a strong opposite to concentration.

\begin{prop}\label{proposition:dissipation.vs.concentration} Let $\mathcal{X}_{n} = (X_{n},d_{n},\mu_{n})$ $(n \in \mathbb{N})$ be a sequence of fully supported $mm$-spaces $\delta$-dissipating for some $\delta > 0$. Then, for all $\ell \in \bigl(\delta^{-1}, \infty\bigr)$ and $\alpha \in \bigl( 0,\tfrac{1}{2} \bigr)$, \begin{displaymath}
	\mathrm{Cap}_{\alpha}\!\left(\mathrm{Lip}^{1}_{\ell}(X_{n},d_{n}),\mathrm{me}_{\mu_{n}}\right) \, \longrightarrow \, \infty \quad (n \longrightarrow \infty) .
\end{displaymath} \end{prop}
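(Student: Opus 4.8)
The plan is to read off the divergence of the capacities directly from the key Lemma~\ref{lemma:capacity}, feeding its hypothesis with separation estimates supplied by the assumed $\delta$-dissipation through the reformulation of Lemma~\ref{lemma:dissipation}. The only extra ingredient is the elementary monotonicity of capacity in its radius: if $0 < \beta \le \gamma$, then every $\gamma$-discrete subset is $\beta$-discrete, whence $\mathrm{Cap}_{\beta}(\mathcal{Y}) \ge \mathrm{Cap}_{\gamma}(\mathcal{Y})$ for any metric space $\mathcal{Y}$. The substantive work is thus already contained in Lemma~\ref{lemma:capacity}, and what remains is essentially parameter bookkeeping.

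First I fix the auxiliary parameters. Given $\ell \in (\delta^{-1},\infty)$, I set $\delta_{0} \defeq \ell^{-1}$, so that $\delta_{0} < \delta$ and $\mathrm{Lip}^{1}_{\delta_{0}^{-1}}(X_{n},d_{n}) = \mathrm{Lip}^{1}_{\ell}(X_{n},d_{n})$. Given $\alpha \in \bigl(0,\tfrac{1}{2}\bigr)$, I choose $\tau \in \bigl(\alpha,\tfrac{1}{2}\bigr)$ and then $\epsilon \in \bigl(0, 1 - \tfrac{\alpha}{\tau}\bigr)$; this guarantees $\epsilon \in (0,1)$, $\tau \in \bigl(0,\tfrac{1}{2}\bigr)$, and $(1-\epsilon)\tau > \alpha$. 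These choices constitute the entire parameter juggling, and the constraint $\alpha < \tfrac{1}{2}$ is precisely what renders them possible, reflecting the sharp threshold appearing in Lemma~\ref{lemma:capacity}.

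Next I run the divergence argument. Fix an arbitrary integer $N \ge 1$. Since $(1-\epsilon)2^{-N}$ lies in $\bigl(0, 2^{-N}\bigr) = \bigl(0, \tfrac{1}{(2^{N}-1)+1}\bigr)$, Lemma~\ref{lemma:dissipation} applied with $m = 2^{N}-1$ yields $\liminf_{n\to\infty} \mathrm{Sep}_{2^{N}-1}(\mathcal{X}_{n}; (1-\epsilon)2^{-N}) \ge \delta > \delta_{0}$. Hence there is an index $n_{N} \in \mathbb{N}$ such that $\mathrm{Sep}_{2^{N}-1}(\mathcal{X}_{n}; (1-\epsilon)2^{-N}) \ge \delta_{0}$ for all $n \ge n_{N}$. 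For each such $n$, Lemma~\ref{lemma:capacity}, applied to the fully supported $mm$-space $\mathcal{X}_{n}$ with parameters $N, \delta_{0}, \epsilon, \tau$, gives $\mathrm{Cap}_{(1-\epsilon)\tau}\bigl(\mathrm{Lip}^{1}_{\delta_{0}^{-1}}(X_{n},d_{n}),\mathrm{me}_{\mu_{n}}\bigr) \ge N$. Using $\delta_{0}^{-1} = \ell$ together with $(1-\epsilon)\tau > \alpha$ and the monotonicity recalled above, I conclude $\mathrm{Cap}_{\alpha}\bigl(\mathrm{Lip}^{1}_{\ell}(X_{n},d_{n}),\mathrm{me}_{\mu_{n}}\bigr) \ge N$ for all $n \ge n_{N}$. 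As $N \ge 1$ was arbitrary, this is exactly the assertion that the capacities diverge to $\infty$.

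I do not expect any deep obstacle, since the hard analytic content, the construction of $N$ pairwise $\mathrm{me}_{\mu}$-separated bounded Lipschitz functions from a fine separation of the space via Rademacher functions, has already been carried out in Lemma~\ref{lemma:capacity}. The single point demanding care is the simultaneous matching of three constraints: the prescribed Lipschitz bound forces $\delta_{0} = \ell^{-1}$ and hence requires $\delta_{0} < \delta$, which is why the hypothesis must read $\ell > \delta^{-1}$ rather than $\ell \ge \delta^{-1}$; and the prescribed capacity radius forces $(1-\epsilon)\tau \ge \alpha$, which is feasible exactly because $\alpha < \tfrac{1}{2}$.
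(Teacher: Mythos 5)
Your proposal is correct and follows essentially the same route as the paper's own proof: feed the separation bound coming from $\delta$-dissipation (your use of Lemma~\ref{lemma:dissipation} is just the trivial direction, i.e., the definition) into Lemma~\ref{lemma:capacity} with $\delta$ replaced by $\ell^{-1}$, then conclude via monotonicity of $\mathrm{Cap}$ in its radius. Your explicit choice $\tau \in \bigl(\alpha,\tfrac{1}{2}\bigr)$, $\epsilon \in \bigl(0,1-\tfrac{\alpha}{\tau}\bigr)$ just makes concrete the paper's unspecified choice of $\epsilon,\tau$ with $(1-\epsilon)\tau \geq \alpha$.
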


\begin{proof} Let $\ell \in \bigl(\delta^{-1}, \infty\bigr)$, $\alpha \in \bigl( 0,\tfrac{1}{2} \bigr)$, $m \in \mathbb{N}\setminus \{ 0 \}$. Choose $\epsilon \in (0,1)$ and $\tau \in \bigl( 0,\tfrac{1}{2} \bigr)$ such that $(1-\epsilon)\tau \geq \alpha$. Since $(\mathcal{X}_{n})_{n \in \mathbb{N}}$ $\delta$-dissipates, we find $n_{0} \in \mathbb{N}$ with $\mathrm{Sep}_{2^{m}-1}(\mathcal{X}_{n};(1-\epsilon)2^{-m}) \geq \ell^{-1}$ for all $n \in \mathbb{N}$, $n \geq n_{0}$. By Lemma~\ref{lemma:capacity}, it follows that \begin{displaymath}
	\mathrm{Cap}_{\alpha}\!\left(\mathrm{Lip}_{\ell}^{1}(X_{n},d_{n}),\mathrm{me}_{\mu_{n}}\right) \, \geq \, \mathrm{Cap}_{(1 - \epsilon)\tau}\!\left(\mathrm{Lip}_{\ell}^{1}(X_{n},d_{n}),\mathrm{me}_{\mu_{n}}\right) \, \geq \, m
\end{displaymath} for every $n \in \mathbb{N}$ with $n \geq n_{0}$, which proves our claim. \end{proof}
	
Combining Proposition~\ref{proposition:dissipation.vs.concentration} with Theorem~\ref{theorem:gromov.precompactness}, we arrive at the following.
	
\begin{cor}\label{corollary:dissipation.vs.concentration} If a sequence of $mm$-spaces dissipates, then it does not have a $d_{\mathrm{conc}}$-Cauchy subsequence. \end{cor}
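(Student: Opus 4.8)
The plan is to argue by contradiction, leveraging the contrapositive direction of Gromov's compactness theorem (Theorem~\ref{theorem:gromov.precompactness}$(1)$) together with the capacity blow-up established in Proposition~\ref{proposition:dissipation.vs.concentration}. So suppose $(\mathcal{X}_{n})_{n \in \mathbb{N}}$ dissipates, say it $\delta$-dissipates for some $\delta > 0$, yet admits a $d_{\mathrm{conc}}$-Cauchy subsequence $(\mathcal{X}_{n_{k}})_{k \in \mathbb{N}}$. A Cauchy sequence is in particular totally bounded, hence $d_{\mathrm{conc}}$-precompact, and the aim is to derive a violation of uniformly bounded capacity for an associated family of Lipschitz-function spaces.

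The first technical point to address is that Proposition~\ref{proposition:dissipation.vs.concentration} and Lemma~\ref{lemma:gromov.vs.hausdorff} are stated only for \emph{fully supported} $mm$-spaces, whereas the definition of dissipation and the statement of Corollary~\ref{corollary:dissipation.vs.concentration} impose no such hypothesis. First I would reduce to the fully supported case: given any $mm$-space $\mathcal{X} = (X,d,\mu)$, replacing $X$ by $\spt\mu$ and $\mu$ by $\mu\!\!\upharpoonright_{\spt\mu}$ yields an isomorphic $mm$-space which is fully supported, and this operation changes neither the separation distances $\mathrm{Sep}(\mathcal{X};\kappa_{0},\ldots,\kappa_{m})$ (since any Borel $B_{i}$ may be intersected with $\spt\mu$ without decreasing its measure or its mutual distances) nor the value of $d_{\mathrm{conc}}$ (which is isomorphism-invariant). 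Hence both the dissipation hypothesis and the Cauchy hypothesis pass to the fully supported representatives, and I may assume each $\mathcal{X}_{n}$ fully supported from the outset.

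Next I would fix parameters making the two cited results applicable. Choose any $\ell \in (\delta^{-1},\infty)$ and any $\alpha \in (0,\tfrac{1}{2})$; by Proposition~\ref{proposition:dissipation.vs.concentration} we have
\begin{displaymath}
	\mathrm{Cap}_{\alpha}\!\left(\mathrm{Lip}^{1}_{\ell}(X_{n_{k}},d_{n_{k}}),\mathrm{me}_{\mu_{n_{k}}}\right) \, \longrightarrow \, \infty \quad (k \longrightarrow \infty).
\end{displaymath}
On the other hand, Lemma~\ref{lemma:gromov.vs.hausdorff} gives, for the same $\ell \geq 1$ and $s = 1$,
\begin{displaymath}
	d_{\mathrm{GH}}\bigl(\bigl(\mathrm{Lip}_{\ell}^{1}(X_{n_{k}},d_{n_{k}}),\mathrm{me}_{\mu_{n_{k}}}\bigr),\bigl(\mathrm{Lip}_{\ell}^{1}(X_{n_{l}},d_{n_{l}}),\mathrm{me}_{\mu_{n_{l}}}\bigr)\bigr) \, \leq \, \ell\, d_{\mathrm{conc}}(\mathcal{X}_{n_{k}},\mathcal{X}_{n_{l}}),
\end{displaymath}
so the $d_{\mathrm{conc}}$-Cauchy property of $(\mathcal{X}_{n_{k}})_{k}$ transfers directly into a $d_{\mathrm{GH}}$-Cauchy property for the sequence of compact Lipschitz spaces $\mathcal{L}_{k} \defeq (\mathrm{Lip}_{\ell}^{1}(X_{n_{k}},d_{n_{k}}),\mathrm{me}_{\mu_{n_{k}}})$.

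Finally I would close the loop. The family $\{\mathcal{L}_{k} \mid k \in \mathbb{N}\}$, being $d_{\mathrm{GH}}$-Cauchy, is totally bounded, hence $d_{\mathrm{GH}}$-precompact; by Theorem~\ref{theorem:gromov.precompactness}$(1)$ it must therefore have uniformly bounded capacity, i.e.\ $\sup_{k}\mathrm{Cap}_{\alpha}(\mathcal{L}_{k}) < \infty$. This flatly contradicts the divergence $\mathrm{Cap}_{\alpha}(\mathcal{L}_{k}) \to \infty$ obtained from Proposition~\ref{proposition:dissipation.vs.concentration}, completing the argument. I expect the only real subtlety to be the reduction to fully supported spaces in the first paragraph above; every other step is a direct citation of the two preceding results, and the interplay between them — capacities forced to blow up by dissipation versus capacities forced to stay bounded by precompactness — is precisely the tension that makes dissipation incompatible with having a Cauchy subsequence.
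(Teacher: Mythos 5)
Your proof is correct and follows essentially the same route as the paper's: reduction to fully supported spaces via isomorphism invariance, capacity blow-up from Proposition~\ref{proposition:dissipation.vs.concentration}, transfer of the Cauchy property to the spaces of Lipschitz functions via Lemma~\ref{lemma:gromov.vs.hausdorff}, and the clash with uniformly bounded capacity from Theorem~\ref{theorem:gromov.precompactness}(1). The only difference is presentational: the paper argues contrapositively (no dissipating sequence can be $d_{\mathrm{conc}}$-Cauchy, using that dissipation passes to subsequences), whereas you run a contradiction along the chosen subsequence; the ingredients and their interplay are identical.
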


\begin{proof} Since dissipation is inherited by subsequences, it suffices to check that no dissipating sequence of $mm$-spaces can possibly be $d_{\mathrm{conc}}$-Cauchy. Moreover, since both dissipation and being Cauchy with respect to $d_{\mathrm{conc}}$ are invariant under $mm$-space isomorphisms and every $mm$-space is isomorphic to a fully supported one, it is sufficient to consider a sequence of fully supported $mm$-spaces $\mathcal{X}_{n} = (X_{n},d_{n},\mu_{n})$ $(n \in \mathbb{N})$. If $(\mathcal{X}_{n})_{n \in \mathbb{N}}$ $\delta$-dissipates for some $\delta > 0$, then Proposition~\ref{proposition:dissipation.vs.concentration} along with Theorem~\ref{theorem:gromov.precompactness} asserts that $\bigl(\mathrm{Lip}_{1+\delta^{-1}}^{1}(X_{n},d_{n}),\mathrm{me}_{\mu_{n}}\bigr)_{n \in \mathbb{N}}$ is not $d_{\mathrm{GH}}$-Cauchy, which, according to Lemma~\ref{lemma:gromov.vs.hausdorff}, implies that $(\mathcal{X}_{n})_{n \in \mathbb{N}}$ is not $d_{\mathrm{conc}}$-Cauchy. \end{proof}

\begin{remark}\label{remark:dissipation.vs.concentration} The converse of Corollary~\ref{corollary:dissipation.vs.concentration} does not hold. In fact, letting $\lambda_{n} \defeq \lambda^{\otimes n}$ and \begin{displaymath}
	d_{n} \colon [0,1]^{n} \times [0,1]^{n} \, \longrightarrow \, \mathbb{R}, \quad (x,y) \, \longmapsto \, \sup \{ \vert x_{i} - y_{i} \vert \mid i \in \{ 1,\ldots,n \} \}
\end{displaymath} for each $n \in \mathbb{N}$, the sequence $([0,1]^{n},d_{n},\lambda_{n})_{n \in \mathbb{N}}$ does not dissipate~\cite[Theorem~8.8]{ShioyaBook}, but does not contain a $d_{\mathrm{conc}}$-Cauchy subsequence either, as follows by the argument in the proof of~\cite[Proposition~7.36]{ShioyaBook}. \end{remark}

Since the present note is aimed at exhibiting instances of dissipation in topological groups, we will subsequently add some remarks about dissipation in the very general framework of uniform spaces, thus following the lead of Pestov, who in~\cite[Definition~2.6]{pestov02} adapted the L\'evy concentration property from the realm of $mm$-spaces to Borel probability measures in uniform spaces. We provide a corresponding generalization of Gromov's concept of dissipation. Throughout the present note, all uniform spaces are assumed to be \emph{Hausdorff}.

\begin{definition} Let $X$ be a uniform space. For an entourage $U$ in $X$, we will say that a net $(\mu_{i})_{i \in I}$ of Borel probability measures on $X$ \emph{$U$-dissipates in $X$} if there exists a family $(\mathcal{B}_{i})_{i \in I}$ of finite sets of Borel subsets of $X$ such that \begin{enumerate}
	\item[$(1)$] \, $(B \times C) \cap U = \emptyset$ for every $i \in I$ and any two distinct $B,C \in \mathcal{B}_{i}$,
	\item[$(2)$] \, $\lim_{i \to I}\mu_{i}(\bigcup \mathcal{B}_{i}) = 1$, and
	\item[$(3)$] \, $\lim_{i \to I} \sup \{ \mu_{i}(B) \mid B \in \mathcal{B}_{i} \} = 0$.
\end{enumerate} A net of Borel probability measures on $X$ will be said to \emph{dissipate in $X$} if it $U$-dissipates for some entourage $U$ of $X$. \end{definition}

Of course, dissipation and L\'evy concentration~\cite[Definition~2.6]{pestov02} are two mutually exclusive properties for a net of Borel probability measures on a uniform space to have. Let us point out the connection with dissipation of $mm$-spaces.

\begin{prop}\label{proposition:uniform.dissipation} Let $(X,d)$ be any metric space and let $\mathcal{X}_{n} = (X_{n},d_{n},\mu_{n})$ $(n \in \mathbb{N})$ be a sequence of $mm$-spaces. For each $n \in \mathbb{N}$, let $\phi_{n} \colon (X_{n},d_{n}) \to (X,d)$ be an isometric embedding. Then $(\mathcal{X}_{n})_{n \in \mathbb{N}}$ dissipates if and only if the sequence $((\phi_{n})_{\ast}(\mu_{n}))_{n \in \mathbb{N}}$ dissipates in $(X,d)$. \end{prop}

\begin{proof} This is an immediate consequence of Lemma~\ref{lemma:dissipation.revisited}. \end{proof}

We conclude this section with a clarifying remark about the existence of dissipating families of Borel probability measures in uniform spaces. To agree on some terminology and notation, let $X$ be a uniform space. For an entourage $U$ of $X$, we let \begin{displaymath}
	U[A] \defeq \{ y \in X \mid \exists x \in A \colon \, (x,y) \in U \} \qquad (A \subseteq X) .
\end{displaymath} Recall that $X$ is \emph{precompact} if, for every entourage $U$ of $X$, there exists a finite subset $F \subseteq X$ such that $X = U[F]$. Moreover, for an entourage $U$ of $X$, a subset $B \subseteq X$ is called \emph{$U$-discrete} if $(B \times B) \cap U = \Delta_{B}$. It is easy to see that $X$ is precompact if and only if the quantity \begin{displaymath}
	\mathrm{Cap}_{U}(X) \defeq \sup \{ \vert B \vert \mid B \subseteq X \text{ $U$-discrete} \}
\end{displaymath} is finite for every entourage $U$ of $X$.

\begin{prop}\label{proposition:dissipation.in.uniform.spaces} A uniform space $X$ is non-precompact if and only if $X$ admits a dissipating net of (finitely supported regular) Borel probability measures. \end{prop}

\begin{proof} ($\Longrightarrow$) Being non-precompact, $X$ admits an entourage $U$ with $\mathrm{Cap}_{U}(X) = \infty$, whence we find a sequence $(F_{n})_{n \in \mathbb{N}}$ of finite non-empty $U$-discrete subsets of $X$ such that $\vert F_{n} \vert \longrightarrow \infty$ as $n \longrightarrow \infty$. The sequence of finitely supported regular Borel probability measures $(\delta_{F_{n}})_{n \in \mathbb{N}}$ thus $U$-dissipates in $X$, as witnessed by the sequence $\mathcal{B}_{n} \defeq \{ \{ x \} \mid x \in F_{n} \}$ $(n \in \mathbb{N})$.
	
($\Longleftarrow$) Suppose that $X$ admits a net $(\mu_{i})_{i \in I}$ of Borel probability measures $U$-dissipating~in~$X$ for some entourage $U$ of $X$, i.e., there is a family $(\mathcal{B}_{i})_{i \in I}$ of finite sets of Borel subsets of~$X$ such that \begin{enumerate}
	\item[$(1)$] \, $(B \times C) \cap U = \emptyset$ for every $i \in I$ and any two distinct $B,C \in \mathcal{B}_{i}$,
	\item[$(2)$] \, $\lim_{i \to I}\mu_{i}(\bigcup \mathcal{B}_{i}) = 1$, and
	\item[$(3)$] \, $\lim_{i \to I} \sup \{ \mu_{i}(B) \mid B \in \mathcal{B}_{i} \} = 0$.
\end{enumerate} We show that $\mathrm{Cap}_{U}(X) = \infty$. To this end, let $m \in \mathbb{N}\setminus \{ 0 \}$. There exists $i \in I$ such that \begin{equation}\tag{$\ast$}\label{uniform}
	\mu_{i}\!\left( \bigcup \mathcal{B}_{i} \right) \, > \, 1-\tfrac{1}{m} , \qquad \quad \sup \{ \mu_{i}(B) \mid B \in \mathcal{B}_{i} \} \, \leq \, \tfrac{1}{m} .
\end{equation} Select a subset $F \subseteq \bigcup \mathcal{B}_{i}$ such that $\vert F \cap B \vert = 1$ for each $B \in \mathcal{B}_{i}$. Clearly, $F$ is $U$-discrete by~(1). Moreover, $\vert F \vert \geq m$ due to~\eqref{uniform}, and thus $\mathrm{Cap}_{U}(X) \geq m$. Hence, $X$ is not precompact. \end{proof}

\section{Topological groups: convergence to invariance}\label{section:invariance}

The purpose of this section is to recompile some results of~\cite{SchneiderThom} concerning amenability of topological groups and UEB-convergence to invariance. Throughout the present note, all topological groups are assumed to be \emph{Hausdorff}.

Let $X$ be a uniform space and consider the commutative unital real Banach algebra~$\mathrm{UCB}(X)$ of all bounded uniformly continuous real-valued functions on $X$ carrying the supremum~norm. The collection $\mathrm{M}(X)$ of all \emph{means} on $\mathrm{UCB}(X)$, that is, (necessarily continuous) positive unital linear forms on $\mathrm{UCB}(X)$, endowed with the weak-$\ast$ topology, i.e., the initial topology generated by all maps $\mathrm{M}(X) \to \mathbb{R}, \, \mu \mapsto \mu (f)$ where $f \in \mathrm{UCB}(X)$, is a compact Hausdorff space. The set $\mathrm{S}(X)$ of all (necessarily positive and linear) unital ring homomorphisms from $\mathrm{UCB}(X)$ into~$\mathbb{R}$ is a closed subspace thereof, commonly referred to as the \emph{Samuel compactification} of~$X$. A subset $H \subseteq \mathrm{UCB}(X)$ is called \emph{UEB} (short for \emph{uniformly equicontinuous bounded}) if $H$ is bounded in the supremum norm and \emph{uniformly equicontinuous}, i.e., for every $\epsilon > 0$ there exists an entourage $U$ of $X$ such that \begin{displaymath}
	\forall f \in H \ \forall (x,y) \in U \colon \qquad \vert f(x) - f(y) \vert \, \leq \, \epsilon .
\end{displaymath} The set $\mathrm{UEB}(X)$ of all UEB subsets of $\mathrm{UCB}(X)$ is a convex vector bornology on the vector space $\mathrm{UCB}(X)$. The \emph{UEB topology} on the continuous dual~$\mathrm{UCB}(X)^{\ast}$ is defined as the topology of uniform convergence on UEB subsets of $\mathrm{UCB}(X)$, which is a locally convex linear topology on the vector space $\mathrm{UCB}(X)^{\ast}$ containing the weak-$\ast$ topology, i.e., the initial topology generated by the maps $\mathrm{UCB}(X)^{\ast} \to \mathbb{R}, \, \mu  \mapsto \mu (f)$ where $f \in \mathrm{UCB}(X)$. A comprehensive account on the UEB topology is given in Pachl's book~\cite{PachlBook}.

Let $G$ be a topological group and denote by $\mathcal{U}(G)$ the neighborhood filter of the neutral element in $G$. We endow $G$ with its \emph{right uniformity} defined by the basic entourages \begin{displaymath}
	U_{\Rsh} \defeq \left\{ (x,y) \in G \times G \left\vert \, yx^{-1} \in U \right\} \qquad (U \in \mathcal{U}(G)) . \right.
\end{displaymath} In reference to this uniformity, we denote by $\mathrm{RUCB}(G)$ the set of all bounded uniformly continuous real-valued function on $G$ and by $\mathrm{RUEB}(G)$ the set of all UEB subsets of $\mathrm{RUCB}(G)$. Letting $\lambda_{g} \colon G \to G, \, x \mapsto gx$ for any $g \in G$, we note that $G$ acts continuously on $\mathrm{M}(G)$ by \begin{displaymath}
(g\mu)(f) \defeq \mu (f \circ \lambda_{g}) \qquad (g \in G, \, \mu \in \mathrm{M}(G), \, f \in \mathrm{RUCB}(G)) ,
\end{displaymath} and that $\mathrm{S}(G)$ is a $G$-invariant subspace of $\mathrm{M}(G)$. Recall that $G$ is \emph{amenable} (resp., \emph{extremely amenable}) if $\mathrm{M}(G)$ (resp., $\mathrm{S}(G)$) contains a $G$-fixed point. It is well known that $G$ is amenable (resp., extremely amenable) if and only if every continuous action of $G$ on a non-void compact Hausdorff space admits an invariant regular Borel probability measure (resp., a fixed point). For more on (extreme) amenability of topological groups, we refer to~\cite{PestovBook}.

We will need a characterization of amenability in terms of asymptotically invariant finitely supported probability measures from~\cite{SchneiderThom}.

\begin{definition} Let $G$ be a topological group. A net $(\mu_{i})_{i \in I}$ of Borel probability measures on $G$ is said to \emph{UEB-converge to invariance (over $G$)} if \begin{displaymath}
	\forall g \in G \ \forall H \in \mathrm{RUEB}(G) \colon \qquad \sup\nolimits_{f \in H} \left\lvert \int f \circ \lambda_{g} \, d\mu_{i} - \int f \, d\mu_{i} \right\rvert \longrightarrow 0 \quad (i \longrightarrow I) .
\end{displaymath} \end{definition}
	
\begin{thm}[\cite{SchneiderThom}, Theorem~3.2]\label{theorem:topological.day} A topological group is amenable if and only if it admits a net of (finitely supported, regular) Borel probability measures UEB-converging to invariance. \end{thm}

A very useful fact about UEB-convergence to invariance is the stability with respect to convolution of measures from the right, see Lemma~\ref{lemma:convoluted.convergence} below. For the proof, we will use the following Lemma~\ref{lemma:convoluted.functions}. For a Borel probability measure $\mu$ on a topological group $G$ and any bounded Borel function $f \colon G \to \mathbb{R}$, let us define ${}_{\mu}f \colon G \to \mathbb{R}, \, x \mapsto \int f(xy) \, d\mu(y)$.

\begin{lem}[cf.~\cite{PachlBook}, Lemma~9.1]\label{lemma:convoluted.functions} Let $G$ be a topological group. For every $H \in \mathrm{RUEB}(G)$, \begin{displaymath}
	\{ {}_{\mu}f \mid f \in H, \, \mu \in \mathrm{P}(G) \} \in \mathrm{RUEB}(G) .
\end{displaymath} \end{lem}

\begin{proof} Let $H \in \mathrm{RUEB}(G)$. Evidently, as $H$ is norm-bounded, $K \defeq \{ {}_{\mu} f \mid f \in H, \, \mu \in \mathrm{P}(G) \}$ must be, too. To prove uniform equicontinuity, let $\epsilon > 0$. Since $H$ is a member of $\mathrm{RUEB}(G)$, there exists $U \in \mathcal{U}(G)$ such that \begin{displaymath}
	\forall f \in H \, \forall x,y \in G \colon \qquad xy^{-1} \in U \ \Longrightarrow \ \vert f(x) - f(y) \vert \, \leq \, \epsilon .
\end{displaymath} Therefore, if $f \in H$ and $x,y \in G$ with $xy^{-1} \in U$, then $(xz)(yz)^{-1} = xy^{-1} \in U$ and hence $\vert f(xz) - f(yz) \vert \leq \epsilon$ for all $z \in G$, which implies that \begin{displaymath}
	\left\lvert \left( {}_{\mu}f \right) (x) - \left( {}_{\mu}f \right) (y) \right\rvert \, = \, \left\lvert \int f(xz) - f(yz) \, d\mu(z) \right\rvert \, \leq \, \int \lvert f(xz) - f(yz) \rvert \, d\mu(z) \, \leq \, \epsilon
\end{displaymath} for any $\mu \in \mathrm{P}(G)$. This shows that $K$ indeed belongs to $\mathrm{RUEB}(G)$. \end{proof}

Given a topological group $G$, we define the \emph{convolution} of two measures $\mu , \nu \in \mathrm{P}(G)$ to be the Borel probability measure $\mu \ast \nu \defeq m_{\ast}(\mu \otimes \nu)$, where $m \colon G \times G \to G, \, (x,y) \mapsto xy$.

\begin{lem}\label{lemma:convoluted.convergence} Let $(\mu_{i})_{i \in I}$ and $(\nu_{i})_{i \in I}$ be two nets of Borel probability measures on a topological group $G$. If $(\mu_{i})_{i \in I}$ UEB-converges to invariance over $G$, then so does $(\mu_{i} \ast \nu_{i})_{i \in I}$. \end{lem}

\begin{proof} Let $H \in \mathrm{RUEB}(G)$. Due to Lemma~\ref{lemma:convoluted.functions}, $\{ {}_{\nu}f \mid f \in H, \, \nu \in \mathrm{P}(G) \}$ belongs to $\mathrm{RUEB}(G)$. Hence, as $(\mu_{i})_{i \in I}$ UEB-converges to invariance over $G$, \begin{displaymath}
	\sup\nolimits_{\nu \in \mathrm{P}(G)}\sup\nolimits_{f \in H} \left\lvert \int \left( {}_{\nu} f\right) \circ \lambda_{g} \, d\mu_{i} - \int {}_{\nu} f \, d\mu_{i} \right\rvert \, \longrightarrow \, 0 \quad (i \longrightarrow I) .
\end{displaymath} for every $g \in G$. Since Fubini's theorem yields that \begin{displaymath}
	\int \left( {}_{\nu_{i}}f \right) \circ \lambda_{g} \, d\mu_{i} \, = \, \int \int f(gxy) \, d\nu_{i}(y) \, d\mu_{i}(x) \, = \, \int \int f(gxy) \, d\mu_{i}(x) \, d\nu_{i}(y) \, = \, \int f \circ \lambda_{g} \, d(\mu_{i} \ast \nu_{i})
\end{displaymath} for all $i \in I$, $f \in H$ and $g \in G$, it follows that \begin{align*}
	\sup\nolimits_{f \in H} \left\lvert \int f \circ \lambda_{g} \, d(\mu_{i} \, \ast \! \right. & \left. \nu_{i})- \int f \, d(\mu_{i} \ast \nu_{i}) \right\rvert \, = \, \sup\nolimits_{f \in H} \left\lvert \int \left( {}_{\nu_{i}}f \right) \circ \lambda_{g} \, d\mu_{i} -\int {}_{\nu_{i}}f \, d\mu_{i} \right\rvert \\
	&\leq \, \sup\nolimits_{\nu \in \mathrm{P}(G)}\sup\nolimits_{f \in H} \left\lvert \int \left( {}_{\nu} f\right) \circ \lambda_{g} \, d\mu_{i} - \int {}_{\nu} f \, d\mu_{i} \right\rvert \, \longrightarrow \, 0 \quad (i \to I) .
\end{align*} for every $g \in G$. This shows that $(\mu_{i} \ast \nu_{i})_{i \in I}$ UEB-converges to invariance over $G$. \end{proof}

\section{Equivariant dissipation}\label{section:groups}

The main result of this note reads as follows.

\begin{thm}\label{theorem:equivariant.dissipation} If a topological group $G$ admits an open subgroup of infinite index, then every net of tight Borel probability measures on $G$ UEB-converging to invariance dissipates in $G$. \end{thm}

\begin{proof} Let $H$ be any open subgroup of $G$. For any tight Borel probability measure $\mu$ on $G$, \begin{equation}\tag{$\ast$}\label{tight}
	\sup \{ \mu(HF) \mid F \subseteq G \text{ finite} \} \, = \, 1 .
\end{equation} To see this, let $\epsilon > 0$. As $\mu$ is tight, there exists a compact subset $K \subseteq G$ with $\mu(K) \geq 1-\epsilon$. Since $K$ is compact and $H$ is open, we find a finite subset $F \subseteq G$ such that $K \subseteq HF$, which readily implies that $\mu(HF) \geq \mu(K) \geq 1-\epsilon$, as desired.

Now suppose that $H$ has infinite index in $G$. We claim that, if $(\mu_{i})_{i \in I}$ is any net of Borel probability measures on $G$ UEB-converging to invariance, then \begin{equation}\tag{$\ast \ast$}\label{dissipation}
	\lim\nolimits_{i \to I} \sup \{ \mu_{i}(Hg) \mid g \in G \} \, = \, 0 .
\end{equation} For a subset $A \subseteq G$, we denote by $\chi_{A} \colon G \to \{ 0,1\}$ the corresponding characteristic function. Since $H$ is an open subgroup of $G$, we have $\{ \chi_{Hg} \mid g \in G \} \in \mathrm{RUEB}(G)$. Indeed, $\{ \chi_{Hg} \mid g \in G \}$ is clearly norm-bounded, and if $x,y \in G$ with $xy^{-1} \in H$, then $\chi_{Hg}(x) = \chi_{Hg}(y)$ for all $g \in G$. Thus, as $(\mu_{i})_{i \in I}$ UEB-converges to invariance over $G$, \begin{displaymath}
	\sup\nolimits_{g \in G} \vert \mu_{i}(fHg) - \mu_{i}(Hg) \vert \, = \, \sup\nolimits_{g \in G} \left\lvert \int \chi_{Hg} \circ \lambda_{f^{-1}} \, d\mu_{i} - \int \chi_{Hg} \, d\mu_{i} \right\rvert \, \longrightarrow \, 0 \quad (i \longrightarrow I) 
\end{displaymath} for all $f \in G$. In order to prove~\eqref{dissipation}, let $\epsilon > 0$. Pick any $n \in \mathbb{N}\setminus \{ 0 \}$ with $\tfrac{2}{n} \leq \epsilon$. Due to $H$ having infinite index in $G$, we find $F \subseteq G$ such that $\vert F \vert = n$ and $fH \cap f'H = \emptyset$ for any two distinct $f,f' \in F$. By the above, there exists $i_{0} \in I$ such that, for all $i \in I$  with $i \geq i_{0}$, \begin{displaymath}
	\sup\nolimits_{f \in F} \sup\nolimits_{g \in G} \vert \mu_{i}(fHg) - \mu_{i}(Hg) \vert \, \leq \, \tfrac{1}{n} .
\end{displaymath} For all $g \in G$ and $i \in I$ with $i \geq i_{0}$, it follows that \begin{displaymath}
	1 \, = \, \mu_{i}(G) \, \geq \, \sum\nolimits_{f \in F} \mu_{i}(fHg) \, \geq \, n \cdot \left( \mu_{i}(Hg) - \tfrac{1}{n} \right) \, = \, n\mu_{i}(Hg) - 1 ,
\end{displaymath} whence $\mu_{i}(Hg) \leq \tfrac{2}{n} \leq \epsilon$. That is, $\sup \{ \mu_{i}(Hg) \mid g \in G\} \leq \epsilon$ for all $i \in I$ with $i \geq i_{0}$, which proves the desired assertion~\eqref{dissipation}.

To conclude, let $(\mu)_{i \in I}$ be any net of tight Borel probability measures on $G$ UEB-converging to invariance over $G$. Thanks to~\eqref{tight}, we may choose finite $H_{\Rsh}$-discrete subsets $F_{i} \subseteq G$ ($i \in I$) such that $\lim_{i \to I} \mu_{i}(HF_{i}) = 1$. Moreover, $\lim_{i \to I} \sup \{ \mu_{i}(Hg) \mid g \in F_{i} \} = 0$ due to~\eqref{dissipation}. The family $\mathcal{B}_{i} \defeq \{ Hg \mid g \in F_{i} \}$~($i \in I$) hence witnesses that $(\mu_{i})_{i \in I}$ dissipates in $G$. \end{proof}

\begin{remarks} Let $G$ be a topological group.
	
(1) Concerning the hypothesis of Theorem~\ref{theorem:equivariant.dissipation}, note that $G$ admitting an open subgroup of infinite index is equivalent to the existence of a surjective continuous homomorphism from~$G$ onto some non-precompact, non-archimedean topological~group.
	
(2) Provided that $G$ is separable, the tightness condition for the measures in Theorem~\ref{theorem:equivariant.dissipation} may be removed, for if $H$ is any open subgroup of $G$, then $\{ Hg \mid g \in G \}$ is countable and thus \eqref{tight} in the proof of Theorem~\ref{theorem:equivariant.dissipation} holds for any Borel probability measure on $G$, by $\sigma$-additivity. This works more generally in case $G$ is \emph{$\tau$-narrow}~\cite[Section~5.1, p.~286]{AT} for some cardinal $\tau$ not \emph{real-valued measurable}~\cite[Appendix~3, Definition~1]{BB00}: if $H$ is an open subgroup of $G$, then the cardinality of $\{ Hg \mid g \in G \}$ is not real-valued measurable either, and consequently \cite[Chapter~8, Theorem~1]{BB00} asserts that, for any Borel probability measure $\mu$ on $G$, there exists a countable subset $C \subseteq G$ with $\mu (HC) = 1$, which implies~\eqref{tight} by $\sigma$-additivity of $\mu$. \end{remarks}

We proceed to consequences of Theorem~\ref{theorem:equivariant.dissipation}.

\begin{cor}\label{corollary:equivariant.dissipation} Let $G$ be a metrizable topological group along with a right-invariant compatible metric $d$ and let $(K_{n})_{n \in \mathbb{N}}$ be an ascending chain of compact subgroups such that $G = \overline{\bigcup_{n \in \mathbb{N}} K_{n}}$. For each $n \in \mathbb{N}$, we define $d_{n} \defeq d\!\!\upharpoonright_{K_{n}}$ and denote by $\mu_{n}$ the normalized Haar measure on~$K_{n}$. If $G$ admits an open subgroup of infinite index, then $(K_{n},d_{n},\mu_{n})_{n \in \mathbb{N}}$ dissipates, thus fails to have a $d_{\mathrm{conc}}$-Cauchy subsequence. \end{cor}

\begin{proof} Due to Theorem~\ref{theorem:equivariant.dissipation}, the measures $(\mu_{n})_{n \in \mathbb{N}}$, pushed forward to $G$ along the respective inclusion maps, dissipate in $G$. Being a compatible right-invariant metric, $d$ generates the right uniformity of $G$. Consequently, $(K_{n},d_{n},\mu_{n})_{n \in \mathbb{N}}$ must dissipate by Proposition~\ref{proposition:uniform.dissipation}. By Corollary~\ref{corollary:dissipation.vs.concentration}, this entails that $(K_{n},d_{n},\mu_{n})_{n \in \mathbb{N}}$ cannot have a $d_{\mathrm{conc}}$-Cauchy subsequence. \end{proof}

\begin{remark}\label{remark:equivariant.dissipation} Let $G$ be a metrizable topological group together with a left-invariant compatible metric $d$. Then \begin{displaymath}
	d^{-} \colon G \times G \, \longrightarrow \, \mathbb{R}, \quad (x,y) \, \longmapsto \, d\!\left(x^{-1},y^{-1}\right)
\end{displaymath} is a right-invariant compatible metric on $G$. Moreover, if $K$ is any compact subgroup of $G$ and $\mu$ denotes its normalized Haar measure, then $(K,d^{-}\!\!\upharpoonright_{K},\mu) \to (K,d\!\!\upharpoonright_{K},\mu), \, x \mapsto x^{-1}$ constitutes an isomorphism of $mm$-spaces. It follows that, in Corollary~\ref{corollary:equivariant.dissipation}, the word \emph{right-invariant} may equivalently be replaced by \emph{left-invariant}. \end{remark}

In view of Remark~\ref{remark:equivariant.dissipation}, our Corollary~\ref{corollary:equivariant.dissipation} resolves Problem~\ref{problem} in the negative.

\begin{cor}\label{corollary:final} For each $n \in \mathbb{N}$, let $\mu_{n}$ denote the normalized counting measure on $\mathrm{Sym}(n)$. If $d$ is a left-invariant metric on $\mathrm{Sym}(\mathbb{N})$, compatible with the topology of pointwise convergence, then $\bigl(\mathrm{Sym}(n),d\!\!\upharpoonright_{\mathrm{Sym}(n)},\mu_{n}\bigr)_{n \in \mathbb{N}}$ dissipates, thus fails to admit a $d_{\mathrm{conc}}$-Cauchy subsequence. \end{cor}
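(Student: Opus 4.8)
The plan is to deduce the statement directly from Theorem~\ref{theorem:equivariant.dissipation} by realizing $\mathrm{Sym}(\mathbb{N})$, endowed with the topology of pointwise convergence, as an instance of the general setup treated there. First I would record that $\mathrm{Sym}(\mathbb{N})$ is a metrizable (indeed Polish) topological group and that it is non-archimedean, the pointwise stabilizers of finite subsets of $\mathbb{N}$ forming a neighbourhood basis of the identity consisting of open subgroups. For each $n \in \mathbb{N}$ I identify $\mathrm{Sym}(n)$ with the subgroup $K_{n}$ of all $g \in \mathrm{Sym}(\mathbb{N})$ fixing every point outside $\{0,\ldots,n-1\}$. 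These are finite, hence compact, subgroups, and they form an ascending chain $(K_{n})_{n \in \mathbb{N}}$ whose union is the group of finitely supported permutations; the latter is dense in $\mathrm{Sym}(\mathbb{N})$ for the topology of pointwise convergence, so $\mathrm{Sym}(\mathbb{N}) = \overline{\bigcup_{n \in \mathbb{N}} K_{n}}$, which verifies the chain hypothesis of the theorem.

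Next I would check the two remaining hypotheses. The normalized counting measure $\mu_{n}$ on the finite group $\mathrm{Sym}(n)$ coincides with its normalized Haar measure, so the measures are exactly those appearing in Theorem~\ref{theorem:equivariant.dissipation}. For the crucial structural hypothesis, I take $H$ to be the point stabilizer $\{ g \in \mathrm{Sym}(\mathbb{N}) \mid g(0) = 0 \}$, which is an open subgroup. Since two elements $g,g'$ lie in the same left coset of $H$ precisely when $g(0) = g'(0)$, and the orbit of $0$ under $\mathrm{Sym}(\mathbb{N})$ is all of $\mathbb{N}$, the cosets of $H$ are in bijection with $\mathbb{N}$, so $H$ has infinite index. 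Thus $\mathrm{Sym}(\mathbb{N})$ admits an open subgroup of infinite index.

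With all hypotheses in place, I would conclude as follows. Since $d$ is assumed left-invariant whereas Theorem~\ref{theorem:equivariant.dissipation} is phrased for a right-invariant metric, I invoke part (2) of the Remark following the theorem, which shows via $x \mapsto x^{-1}$ that the two formulations are interchangeable. Theorem~\ref{theorem:equivariant.dissipation} then yields that $\bigl(\mathrm{Sym}(n),d\!\!\upharpoonright_{\mathrm{Sym}(n)},\mu_{n}\bigr)_{n \in \mathbb{N}}$ dissipates, and Corollary~\ref{corollary:dissipation.vs.concentration} immediately gives that this sequence admits no $d_{\mathrm{conc}}$-Cauchy subsequence. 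I do not anticipate any genuine obstacle: the entire content is the routine verification that $\mathrm{Sym}(\mathbb{N})$, with $K_{n} = \mathrm{Sym}(n)$, satisfies the hypotheses of the general theorem, the only point requiring minor care being the passage between left- and right-invariance, which the Remark already settles.
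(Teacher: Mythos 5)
Your proposal is correct and is exactly the argument the paper intends: verify that $\mathrm{Sym}(\mathbb{N})$ with $K_{n} = \mathrm{Sym}(n)$ satisfies the hypotheses of Theorem~\ref{theorem:equivariant.dissipation} (non-archimedean, dense ascending chain of compact subgroups, point stabilizer as open subgroup of infinite index), pass from left- to right-invariance via the Remark, and conclude with Corollary~\ref{corollary:dissipation.vs.concentration}. The paper leaves these verifications implicit, so your write-up simply makes the same route explicit.
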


Corollary~\ref{corollary:final} is to be compared with the following well-known result due to Maurey~\cite{maurey} (see also~\cite{MilmanSchechtman,PestovBook}): with regard to the \emph{normalized Hamming distances} \begin{displaymath}
	d_{\mathrm{Ham},n} (g,h) \defeq \frac{\vert \{ i \in \{ 1,\ldots,n\} \mid g(i) \ne h(i ) \}\vert}{n} \qquad (g,h \in \mathrm{Sym}(n))
\end{displaymath} and the normalized counting measures $\mu_{n}$ on $\mathrm{Sym}(n)$, the sequence $(\mathrm{Sym}(n),d_{\mathrm{Ham},n},\mu_{n})_{n\geq 1}$ constitutes a normal L\'evy family, thus concentrates to a singleton space.

Pestov's Problem~\ref{problem} has an interesting sibling, namely~\cite[Conjecture~7.4.26]{PestovBook}, which has been confirmed recently in~\cite{EquivariantConcentration} as part of the following more general result.

\begin{thm}[\cite{EquivariantConcentration}, Theorem~1.1]\label{theorem:equivariant.concentration} Let $G$ be a second-countable topological group equipped with a right-invariant compatible metric $d$. Suppose that there exists a sequence $(\mu_{n})_{n \in \mathbb{N}}$ of Borel probability measures on $G$ with compact supports $K_{n} \defeq \spt \mu_{n}$ $(n \in \mathbb{N})$ such that \begin{itemize}[leftmargin=11mm]
	\item[$(1)$] $(\mu_{n})_{n \in \mathbb{N}}$ UEB-converges to invariance over $G$, and \smallskip
	\item[$(2)$] $\left(K_{n}, d\!\!\upharpoonright_{K_{n}}, \mu_{n}\!\!\upharpoonright_{K_{n}}\right)_{n \in \mathbb{N}}$ concentrates to a fully supported, compact $mm$-space $(X,d_{X},\mu_{X})$.
\end{itemize} Then there exists a topological embedding $\psi \colon X \to \mathrm{S}(G)$ such that the push-forward measure $\psi_{\ast}(\mu_{X})$ is $G$-invariant. In particular, $\psi (X)$ is a $G$-invariant subspace of $\mathrm{S}(G)$. \end{thm}

Combining Theorem~\ref{theorem:equivariant.dissipation} with the results of~\cite{EquivariantConcentration}, we subsequently deduce a dichotomy between concentration and dissipation in the context of non-archimedean second-countable topological groups, that is, Corollary~\ref{corollary:dichotomy}. This dichotomy will distinguish precompact topological groups from non-precompact ones. Preparing the statement of Corollary~\ref{corollary:dichotomy}, let us briefly clarify some notions. For a topological group $G$, we consider its \emph{Bohr compactification} $\kappa_{G} \colon G \to \kappa G$ (see~\cite{holm,deVries}),~i.e., $\kappa G$ is the Gelfand spectrum of the $C^{\ast}$-algebra $\mathrm{AP}(G)$ of all almost periodic continuous bounded complex-valued functions on $G$ equipped with the continuous group structure given by \begin{displaymath}
	(\mu \nu)(f) \defeq \mu (g \mapsto \nu (f \circ \lambda_{g})) \qquad (\mu, \nu \in \kappa G, \, f \in \mathrm{AP}(G)) ,
\end{displaymath} and $\kappa_{G} \colon G \to \kappa G$ is the continuous homomorphism defined by \begin{displaymath}
	\kappa_{G}(x)(f) \defeq f(x) \qquad (x \in G, \, f \in \mathrm{AP}(G)) ,
\end{displaymath} which has dense image in $\kappa G$. It is is well known that a topological group $G$ is precompact if and only if $\kappa_{G}$ is a topological embedding. This fact readily implies that, if $G$ is a metrizable precompact topological group and $d$ is any right-invariant compatible metric on~$G$, then there exists a uniquely determined -- necessarily right-invariant -- compatible metric $d_{\kappa G}$ on $\kappa G$ such that $d_{\kappa G}(\kappa_{G}(x),\kappa_{G}(y)) = d (x,y)$ for all $x,y \in G$ (see e.g.~\cite[Lemma~4.5]{EquivariantConcentration}). 

\begin{cor}\label{corollary:dichotomy} Let $G$ be a non-archimedean second-countable topological group together with a right-invariant compatible metric $d$ and let $(\mu_{n})_{n \in \mathbb{N}}$ be a sequence of Borel probability measures on $G$ with compact supports $K_{n} \defeq \spt \mu_{n}$ $(n \in \mathbb{N})$. Suppose that $(\mu_{n})_{n \in \mathbb{N}}$ UEB-converges to invariance over $G$. Then, either \begin{enumerate}[leftmargin=11mm]
	\item[$(1)$] $G$ is precompact, and then $\left(K_{n}, d\!\!\upharpoonright_{K_{n}}, \mu_{n}\!\!\upharpoonright_{K_{n}}\right)_{n \in \mathbb{N}}$ concentrates to $(\kappa G,d_{\kappa G},\mu_{\kappa G})$, or \smallskip
	\item[$(2)$] $G$ is not precompact, and then $\left(K_{n}, d\!\!\upharpoonright_{K_{n}}, \mu_{n}\!\!\upharpoonright_{K_{n}}\right)_{n \in \mathbb{N}}$ dissipates.
\end{enumerate} \end{cor}

\begin{proof} The first assertion is proved in~\cite{EquivariantConcentration}: if $G$ is precompact, then $\left(K_{n}, d\!\!\upharpoonright_{K_{n}}, \mu_{n}\!\!\upharpoonright_{K_{n}}\right)_{n \in \mathbb{N}}$ concentrates to $(\kappa G,d_{\kappa G},\mu_{\kappa G})$ (see~\cite[Proof of Theorem~1.1, first case, pages~10--11]{EquivariantConcentration}). If $G$ is not precompact, then $G$, being non-archimedean, must admit an open subgroup of infinite index, in which case the desired conclusion follows by Theorem~\ref{theorem:equivariant.dissipation} and Proposition~\ref{proposition:uniform.dissipation}. \end{proof}

In particular, Corollary~\ref{corollary:dichotomy} substantiates that the only manifestations of the kind of concentration described by Theorem~\ref{theorem:equivariant.concentration} to be found in the realm of non-archimedean groups are those occuring -- in a trivial fashion -- in precompact topological groups.

Extending earlier work of Pestov~\cite{pestov10} as well as Glasner, Tsirelson and Weiss~\cite{GlasnerTsirelsonWeiss}, it was shown in~\cite[Theorem~3.9]{PestovSchneider} that if $G$ is any topological group admitting a net of Borel probability measures that concentrates in $G$ (see~\cite[Definition~2.6]{pestov02}) and, at the same time, UEB-converges to invariance over $G$, then $G$ is \emph{whirly amenable}, i.e., \begin{itemize}
	\item[$\bullet$] \, $G$ is amenable, and 
	\item[$\bullet$] \, every invariant regular Borel probability measure on a compact $G$-space is supported on the set of fixed points,
\end{itemize} which particularly entails that no non-trivial compact $G$-space can possibly admit an ergodic regular Borel probability measure. In the light of these results, one might be tempted to conjecture that the mere existence of \emph{some} net of Borel probability measures on a topological group, simultaneously dissipating and UEB-converging to invariance, would have interesting dynamical or ergodic-theoretical consequences, beyond the obvious non-precompactness and amenability. We will finish the present note by disposing of this hope: in fact, every non-precompact amenable topological group admits such a net of measures (Proposition~\ref{proposition:precompact}).

Recall that a topological group $G$ is said to be \emph{precompact} if $G$ is precompact with respect to its right uniformity, that is, for every $U \in \mathcal{U}(G)$ there exists a finite subset $F \subseteq G$ such that $G = UF$. The following characterization of precompact groups was found independently by Uspenskij (unpublished, cf.~a footnote in~\cite{uspenskij}) and Solecki~\cite{solecki}. For a short proof, the reader is referred to~\cite[Proposition~4.3]{BouziadTroallic}.

\begin{lem}\label{lemma:usp} A topological group $G$ is precompact if and only if, for every $U \in \mathcal{U}(G)$, there exists a finite subset $F \subseteq G$ with $G = FUF$. \end{lem}

A proof of the following easy consequence of Lemma~\ref{lemma:usp} may be found in~\cite{EquivariantConcentration}.

\begin{cor}[see Corollary~4.3 in~\cite{EquivariantConcentration}]\label{corollary:usp} Let $G$ be a non-precompact topological group. Then there exists some $U \in \mathcal{U}(G)$ such that, for every sequence $(F_{n})_{n \in \mathbb{N}}$ of finite subsets of~$G$, there exists $(g_{n})_{n \in \mathbb{N}} \in G^{\mathbb{N}}$ such that \begin{displaymath}
	\forall m,n \in \mathbb{N}, \, m \ne n \colon \qquad UF_{m}g_{m} \cap UF_{n}g_{n} \, = \, \emptyset .
\end{displaymath} \end{cor}

We may now prove the aforementioned result.

\begin{prop}\label{proposition:precompact} Let $G$ be a topological group. The following are equivalent. \begin{enumerate}
	\item[$(1)$] $G$ is amenable and not precompact. \smallskip
	\item[$(2)$] $G$ admits a net of Borel probability measures simultaneously dissipating in $G$ and UEB-converging to invariance over $G$.\smallskip
	\item[$(3)$] $G$ admits a net of finitely supported regular Borel probability measures simultaneously dissipating in $G$ and UEB-converging to invariance over $G$.
\end{enumerate} \end{prop}

\begin{proof} (2)$\Longrightarrow$(1). This is due to Theorem~\ref{theorem:topological.day} and Proposition~\ref{proposition:dissipation.in.uniform.spaces}. 
	
(3)$\Longrightarrow$(2). Trivial.
	
(1)$\Longrightarrow$(3). Suppose that $G$ is not precompact. Let $U \in \mathcal{U}(G)$ be as in Corollary~\ref{corollary:usp}. According to Theorem~\ref{theorem:topological.day}, there exists a net $(\mu_{i})_{i \in I}$ of finitely supported regular Borel probability measures on $G$ UEB-converging to invariance over $G$. Let $S_{i} \defeq \spt \mu_{i}$ for all $i \in I$. Consider the directed set $J \defeq I \times (\mathbb{N} \setminus \{ 0 \})$ endowed with the partial order $\leq_{J}$ given by \begin{displaymath}
	(i_{0},n_{0}) \, \leq_{J} \, (i_{1},n_{1}) \quad :\Longleftrightarrow \quad i_{0} \leq_{I} i_{1}, \ n_{0} \leq n_{1} \qquad ((i_{0},n_{0}), (i_{1},n_{1}) \in J) .
\end{displaymath} Due to Corollary~\ref{corollary:usp}, for each $(i,n) \in J$ there exists a subset $F_{i,n} \subseteq G$ such that \begin{itemize}
	\item[(i)] \, $\vert F_{i,n} \vert = n$, and
	\item[(ii)] \, $US_{i}g \cap US_{i}h = \emptyset$ for any two distinct $g,h \in F_{i,n}$.
\end{itemize} For every $(i,n) \in J$, let us consider the regular Borel probability measure $\nu_{i,n} \defeq \mu_{i} \ast \delta_{F_{i,n}}$ on~$G$, and note that $\spt \nu_{i,n} = \bigcupdot_{g \in F_{i,n}} S_{i}g$ is finite. Since the net $(\mu_{i})_{(i,n) \in J}$ UEB-converges to invariance over~$G$, so does $(\nu_{i,n})_{(i,n) \in J}$ according to Lemma~\ref{lemma:convoluted.convergence}. Therefore, it only remains to argue that $(\nu_{i,n})_{(i,n) \in J}$ $U_{\Rsh}$-dissipates in $G$. Indeed, if $(i,n) \in J$, then $\mathcal{B}_{i,n} \defeq \{ S_{i}g \mid g \in F_{i,n} \}$ is a finite collection of finite (thus Borel) subsets of $G$, and moreover \begin{itemize}
	\item[$\bullet$] \, $(B \times C) \cap U_{\Rsh} = \emptyset$ for any two distinct $B,C \in \mathcal{B}_{i,n}$ by~(ii),
	\item[$\bullet$] \, $\nu_{i}\!\left( \bigcup \mathcal{B}_{i,n} \right) = \nu_{i}\!\left( S_{i}F_{i,n} \right) = 1$ as $\spt \nu_{i,n} = S_{i}F_{i,n}$,
	\item[$\bullet$] \, $\nu_{i}(S_{i}g) = \tfrac{1}{n}$ for every $g \in F_{i,n}$ by~(i) and~(ii).
\end{itemize} This completes the proof. \end{proof}

We conclude with an additional remark concerning the proof of Proposition~\ref{proposition:precompact}.

\begin{remark} Let $G$ be a topological group and $(\mu_{i})_{i \in I}$ be a net of finitely supported regular Borel probability measures on $G$ UEB-converging to invariance. Let $S_{i} \defeq \spt \mu_{i}$ for $i \in I$.
	
(1) If $G$ is infinite, then $\vert S_{i} \vert \longrightarrow \infty$ as $i \to I$. To see this, let $m \in \mathbb{N}$. As $G$ is infinite, we find a finite subset $E \subseteq G$ with $\vert E \vert > m^{2}$. Pick $U \in \mathcal{U}(G)$ so that $\left( UU^{-1} \right) \cap \left(E^{-1}E\right) = \{ e \}$. By Urysohn's lemma for uniform spaces, there exists $f \in \mathrm{RUCB}(G)$ such that $f(G) \subseteq [0,1]$, $f(e) = 1$ and $f(x) = 0$ whenever $x \in G\setminus U$. For every subset $S \subseteq G$, define $f_{S} \colon G \to [0,1]$ by \begin{displaymath}
	f_{S}(x) \defeq \sup\nolimits_{s \in S} f\! \left(xs^{-1}\right) \qquad (x \in G) .
\end{displaymath} It is not difficult to see that $\{ f_{S} \mid S \subseteq G \} \in \mathrm{RUEB}(G)$. Since the net $(\mu_{i})_{i \in I}$ UEB-converges to invariance over $G$, there exists $i_{0} \in I$ such that \begin{displaymath}
	\forall i \in I, \, i \geq i_{0} \colon \quad \sup\nolimits_{g \in E} \sup\nolimits_{S \subseteq G} \left\lvert \int f_{S} \, d\mu_{i} - \int f_{S} \circ \lambda_{g^{-1}} \, d\mu_{i} \right\rvert \, \leq \, \tfrac{1}{2} .
\end{displaymath} We show that $\vert S_{i} \vert > m$ for all $i \in I$ with $i \geq i_{0}$. To this end, let $i \in I$ with $i \geq i_{0}$. Note that $\int f_{S_{i}} \, d\mu_{i} = 1$, because $\mu_{i}(S_{i}) = 1$ and $f_{S_{i}}(S_{i}) = \{ 1 \}$. Hence, for each $g \in E$, the above implies that $\int f_{S_{i}} \circ \lambda_{g^{-1}} \, d\mu_{i} \geq \tfrac{1}{2}$, and so $\left(f_{S_{i}} \circ \lambda_{g^{-1}}\right)\vert_{S_{i}} \neq \mathbf{0}$, which entails that $gUS_{i} \cap S_{i} \ne \emptyset$,~i.e., $g \in S_{i}S_{i}^{-1}U^{-1}$. Since $\left( UU^{-1} \right) \cap \left(E^{-1}E\right) = \{ e \}$, we conclude that $\vert E \vert \leq \left\lvert S_{i}S_{i}^{-1} \right\rvert \leq \vert S_{i} \vert^{2}$, and therefore $m < \vert S_{i} \vert$ as desired.

(2) Let $n_{i} \defeq \vert S_{i} \vert$ for $i \in I$. Suppose that $G$ is not precompact. Then $n_{i} \longrightarrow \infty$ $(i \to I)$ by~(1). Moreover, by Corollary~\ref{corollary:usp}, for each $i \in I$ there exists an $n_{i}$-element subset $F_{i} \subseteq G$ such that $US_{i}g \cap US_{i}h = \emptyset$ for any two distinct elements $g,h \in F_{i}$. An argument analogous ot the one given in the proof of Proposition~\ref{proposition:precompact} now shows that $(\mu_{i} \ast \delta_{F_{i}})_{i \in I}$ constitutes a net of finitely supported regular Borel probability measures on $G$, simultaneously dissipating in and UEB-converging to invariance over $G$. \end{remark}

\section*{Acknowledgments}

This research has been supported by funding of the Excellence Initiative by the German Federal and State Governments as well as the Brazilian Conselho Nacional de Desenvolvimento Cient\'{i}fico e Tecnol\'{o}gico (CNPq), processo 150929/2017-0. The author is deeply indebted to Vladimir Pestov for a number of inspiring and insightful discussions about the concentration of measured metric spaces, as well as to Tom Hanika for a helpful exchange on combinatorics of finite permutation groups. Also, the kind hospitality of CFM--UFSC (Florian\'opolis) during the origination of this work is gratefully acknowledged. Furthermore, the author would like to express his sincere gratitude towards the anonymous referee for their careful reading and valuable remarks, including a substantial simplification of an earlier proof of Theorem~\ref{theorem:equivariant.dissipation}!

%%%%%%%%%%%%%%%%%%%%%%%%%%%%%%%%%%

\end{document}